\newtheorem{theorem}{Theorem}[section]
\newtheorem{remark}{Remark}[section]
\newtheorem{lemma}[theorem]{Lemma}
\newtheorem{proposition}[theorem]{Proposition}
\newcommand{\n}{\rho}
\newcommand{\ti}{\tilde}
\renewcommand{\div}{ {\rm div }  }
\newcommand{\na}{\nabla }
\newcommand{\bt}{\begin{theorem}}
\newcommand{\bl}{\begin{lemma}}
\newcommand{\el}{\end{lemma}}
\newcommand{\et}{\end{theorem}}
\newcommand{\ve}{\varepsilon}
\newcommand{\la}{\label}
\newcommand{\ol}{\overline}
\newcommand{\bn}{\begin{eqnarray}}
\newcommand{\en}{\end{eqnarray}}
\newcommand{\bnn}{\begin{eqnarray*}}
\newcommand{\enn}{\end{eqnarray*}}
\newcommand{\bnnn}{\begin{eqnarray*}}
\newcommand{\ennn}{\end{eqnarray*}}
\newcommand{\ben}{\begin{enumerate}}
\newcommand{\een}{\end{enumerate}}
\newcommand{\ba}{\begin{aligned}}
\newcommand{\ea}{\end{aligned}}
\newcommand{\be}{\begin{equation}}
\newcommand{\ee}{\end{equation}}
\renewcommand{\div}{ {\rm div }  }
\def\norm[#1]#2{\|#2\|_{#1}}
\def\r{\mathbb{R}}
\newcommand{\curl}{{\rm curl} }
\newcommand{\mn}{\mu(\rho)}
\newcommand{\ban}{\bar\rho}
\newcommand{\xmu}{\underline{\mu}}
\newcommand{\smu}{\bar\mu}
\newcommand{\xl}{\left}
\newcommand{\xr}{\right}
\title{Global Strong Solutions to Density-Dependent Viscosity Navier-Stokes Equations  in 3D Exterior Domains}
\author{Guocai C{\small AI} $^{a}$,    Boqiang L\"u $^{\rm b}$, Yi Peng $^{\rm c} $ \thanks{ The research of \textsc{B. L\"u} was
partially   supported by NNSFC (No. 11601218) and Natural Science Foundation of Jiangxi Province (No. 20161BAB211002).
 Email: gotry@xmu.edu.cn (G. C. Cai),   ajingli@gmail.com (J. Li), lvbq86@163.com(B. L\"u), pengyi16@mails.ucas.ac(Y.Peng).}\\
{\normalsize a.  School of Mathematical Sciences, }\\ {\normalsize  Xiamen University, Xiamen 361005, P. R. China;}  \\
     {\normalsize   b.  Department of Mathematics \& Institute of Mathematics and } \\{\normalsize Interdisciplinary Sciences,  Nanchang University,} \\ {\normalsize   Nanchang 330031, P. R. China;} \\ {\normalsize   c. College of Mathematics and Physics, }\\ {\normalsize Beijing University of Chemical Technology,} \\ {\normalsize Beijing  100029, P. R. China } }
\date{ }
\begin{document}
\maketitle
\begin{abstract}
The nonhomogeneous Navier-Stokes equations with density-dependent viscosity is studied
in  three-dimensional (3D) exterior domains with  nonslip or slip boundary conditions. We prove that the strong solution exists globally in time provided that the gradient of the initial velocity is suitably small. Here the initial density is allowed to contain vacuum states.  Moreover, after developing some new techniques and methods, the large-time behavior of the strong solutions with exponential decay-in-time rates is also obtained.
\end{abstract}

Keywords: nonhomogeneous Navier-Stokes equations; global strong solution; exponential decay; exterior domain; vacuum.

Math Subject Classification: 35Q30; 76N10.

\section{Introduction}

 We are concerned with the  following nonhomogeneous incompressible  Navier-Stokes equations:
\be\label{1.1}
\begin{cases}
\partial_{t}\rho+\div(\rho u)=0, \\
\partial_{t}(\rho u )+\div(\rho u \otimes u )-\div(2\mn D(u))  +\nabla P=0, \\
\div u=0.
\end{cases}
\ee
Here, $t\ge 0$ is time, $x\in\Omega \subset \r^3$ is the spatial coordinate, and the unknown functions $\rho=\rho(x,t)$, $u=(u^1,u^2,u^3)(x,t)$, and $P=P(x,t)$ are the density, velocity and pressure of the fluid, respectively. The deformation tensor is defined by
\be\la{d}
D(u)=\frac{1}{2}\xl[\na u+(\na u)^T\xr],\ee
and the viscosity   $\mu(\n)$     satisfies  the following hypothesis:
\be\la{n3}
\mu \in C^1[0,\infty),~~   \mu(\n)>0.
\ee
The system reveals a fluid which is given by mixing two miscible fluids that are incompressible and have different densities, or a fluid containing a melted substance (see \cite{L1996}). In this paper, motivated by \cite{HLL2021}, our purpose is to  study the global existence and large-time asymptotic behavior of strong solutions of \eqref{1.1} in the exterior of a simply connected bounded domain in $\r^3$. More precisely, the system \eqref{1.1} will be investigated under the assumptions: The domain $\Omega$ is the exterior of a simply connected bounded smooth domain $D$ in $\r^3$ which represents the obstacle, i.e. $\Omega=\r^3-\bar{D}$ and its boundary $\partial\Omega$ is smooth. The system \eqref{1.1} is equipped with  the given initial data
\begin{equation}\label{1.3}
\rho(x,0)=\rho_0(x),\  \n u (x,0)=m_0(x), \ x\in\Omega.
\end{equation}
In order to close the system,  we need some boundary conditions and a condition at infinity. In this paper, we assume that one of the following two boundary conditions
\be \la{Dir1} u=0 \,\,\,\text{on} \,\,\,\partial\Omega,\ee
or
\be \la{Navi}
u \cdot n = 0, \,\,(D(u)\,n)_{tan}=0 \,\,\,\text{on}\,\,\, \partial\Omega
\ee
holds with the far field behavior
\be \la{inf1} u(x,t)\rightarrow0,\,\,as\,\, |x|\rightarrow\infty. \ee

It is worth noting that \eqref{Dir1} is called no-slip boundary condition and \eqref{Navi} is a kind of slip boundary condition where the symbol $v_{\rm tan }$ represents the projection of tangent plane of the vector $v$ on $\partial\Omega$.

The mathematical study of nonhomogeneous incompressible flow dates back to the late 1970s, which was initiated by the Russian school. When the viscosity coefficient $\mu$ is a positive constant and the density $\rho$ is strictly away from vacuum, Kazhikov obtained the global existence of weak solutions \cite{K1974} and then the local existence of strong ones \cite{AK1973} which  is indeed a global one in either two dimensions or three dimensions with "small" initial data in various certain norms (see \cite{lady, zhang1, zhang4, zhang5, zhang6, dan2} and their references therein). However, the system \eqref{1.1} becomes more complicated when the density is allowed to be vacuum even when $\mu$ is still a positive constant.  The global existence of weak solutions is first established by Simon \cite{S1990}, and under some certain compatibility conditions, Choe-Kim \cite{kim2003} proved the local existence of strong solutions for 3D bounded and unbounded domains. Under some smallness conditions on the initial velocity,  Craig-Huang-Wang \cite{huang13} got a global strong solution to the Cauchy problem ($\Omega=\r^3$).

In the general case that the viscosity coefficient $\mu(\rho)$ depends on the density $\rho$, the global existence of weak solutions is due to Lions \cite{L1996}. Abidi-Zhang  \cite{zhang6}  obtained the global strong solutions strictly away from vacuum when  both $\|\na u_0\|_{L^2}$ and $\|\mu(\n_0)-1\|_{L^\infty}$ are small enough. As for the initial density containing vacuum, Cho-Kim \cite{kim2004} established the existence of the local strong solutions under  compatibility conditions similar to \cite{kim2003} and  Huang-Wang \cite{huang15}, Zhang \cite{zjw} showed  the global strong solutions with small $\|\na u_0\|_{L^2}$ in 3D bounded domains. More recently, He-Li-L\"{u} \cite{HLL2021} obtained the global strong ones to the Cauchy problem with small $\|u_0\|_{\dot{H}^\beta}$ for some $\beta\in(\frac{1}{2}, 1]$ and some extra restrictions on $\mu(\rho)$.

Before stating the main results, we explain the notations and
conventions used throughout this paper. First, for integer numbers $1\le r\le \infty, k\ge 1$, the standard homogeneous and inhomogeneous Sobolev spaces are defined as follows:
   \bnn  \begin{cases}L^r=L^r(\Omega ),\quad
W^{k,r}  = W^{k,r}(\Omega ) , \quad H^k = W^{k,2} ,\\ \|\cdot\|_{B_1\cap B_2}=\|\cdot\|_{B_1 }+\|\cdot\|_{B_2}, \mbox{ for two Banach spaces } B_1 \mbox{ and } B_2, \\
 D^{k,r}=D^{k,r}(\Omega )=\{v\in L^1_{\rm loc}(\Omega )| \na^k v\in L^r(\Omega )\},\\
D^1  =\{v\in L^6 (\Omega )| \na  v\in L^2(\Omega )\},   \\
 C_{0,\div}^\infty=\{f\in C_0^\infty~|~\div f=0\},\quad D_{0,\div}^1=\overline{C_{0,\div}^\infty}~\mbox{closure~in~the~norm~of}~D^{1}.\end{cases}\enn

Next, set
$$ \int fdx\triangleq\int_{\Omega }fdx.$$

Finally, for two $3\times 3$  matrices $A=\{a_{ij}\},\,\,B=\{b_{ij}\}$, the symbol $A\colon  B$ represents the trace of $AB$, that is,
 $$ A\colon  B\triangleq \text{tr} (AB)=\sum\limits_{i,j=1}^{3}a_{ij}b_{ji}.$$

Now, we can give our main results of this paper, which indicate the global existence and large-time behavior of strong solution to the system \eqref{1.1}--\eqref{inf1} in the exterior of a simply connected bounded smooth domain $D$ in $\r^3$.
\begin{theorem}\label{thm1} Let $\Omega$ be the exterior of a simply connected bounded domain $D$ in $\r^3$ and its boundary $\partial\Omega$ is smooth. For constants $\bar\n>0,$ $q\in(3,\infty)$,
assume that the initial data $(\rho_0, m_0)$ satisfy
\begin{equation}\label{2.2}
0\le \rho_{0}\le \bar\n,\  \rho_{0} \in L^{3/2}\cap H^1, \  \na\mu(\n_0)\in L^q  ,\  u_{0}\in D_{0,\div}^1 ,\ m_0=\n_0u_0.
\end{equation}

Then for $$ \xmu\triangleq\min_{\n\in[0,\bar\n]}\mn, \quad\smu\triangleq\max_{\n\in[0,\bar\n]}\mn, \quad M\triangleq\|\na \mu(\n_0)\|_{L^q},$$  there exists some small positive constant $\ve_0$ depending only on $q, \bar\n, \xmu, \smu , \|\n_0\|_{L^{3/2}},$ and $M$ such that if
\begin{equation}\label{xx}
\|\nabla u_0\|_{L^2}\le \ve_0,\end{equation}  the system \eqref{1.1}--\eqref{inf1} admits a unique global strong solution $(\rho, u, P)$ satisfying that
for any $0<\tau<T<\infty$ and $p\in [2,p_0)$  with $p_0 \triangleq\min\{6, q\},$
 \be \label{2.3} \begin{cases}
0\le \rho\in C([0,T]; L^{3/2}\cap H^1 ),
\quad \na\mn \in  C([0,T]; L^q) ,  \\
 \na u  \in L^\infty(0,T;L^2)\cap L^\infty(\tau,T;  W^{1,p_0})\cap C([\tau,T]; H^1\cap W^{1,p })  , \\
 P\in  L^\infty(\tau,T;  W^{1,p_0})\cap C([\tau,T]; H^1\cap W^{1,p }) ,  \\
\sqrt{\rho} u_t\in  L^2(0,T; L^2)\cap L^\infty(\tau,T; L^2),\quad
P_t  \in L^2(\tau,T; L^2\cap L^{p_0}),\\ \na u_t  \in L^\infty(\tau,T; L^2 )\cap    L^2(\tau,T;  L^{p_0}),\quad (\n u_t)_t \in L^2(\tau,T; L^2 ).
\end{cases}\ee
Moreover, it holds that \be \la{oiq1}\sup_{0\le t<\infty} \|\na \n\|_{L^2 }   \le 2 \|\na \n_0\|_{L^2 }  ,\quad \sup_{0\le t<\infty} \|\na \mn\|_{L^q }   \le 2 \|\na \mu(\n_0)\|_{L^q }  , \ee and that there exists some positive constant $\lambda$ depending only on $\|\n_0\|_{L^{3/2}}$ and $\xmu$ such that   for all $t\geq1$,
\be \la{e}
 \|\na u_t(\cdot,t)\|^2_{L^2}+
\|\na  u(\cdot,t)\|_{H^1\cap W^{1,p_0}}^2+\|P(\cdot,t)\|_{H^1\cap W^{1,p_0}}^2\le Ce^{-\lambda t},
\ee
where  $C$ depends only on $q, \bar\n, \|\n_0\|_{L^{3/2}}, \xmu, \smu, M,$ $\|\na u_0\|_{L^2},$ and  $\|\na \n_0\|_{L^2}.$
\end{theorem}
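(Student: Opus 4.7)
The plan is to establish Theorem~1.1 by combining a local existence theory with a priori estimates that close a continuity argument based on the smallness of $\|\na u\|_{L^2}$. First, adapting the construction of Cho-Kim \cite{kim2004} to the exterior domain setting yields, under the compatibility \eqref{2.2}, a unique strong solution $(\n, u, P)$ with the regularity listed in \eqref{2.3} on some maximal interval $[0, T^*)$. Global existence then follows once one shows that every norm controlling the solution remains bounded up to any finite $T < T^*$; this is incompatible with blow-up at $T^*$, so $T^* = \infty$.

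The backbone of the a priori analysis is a continuity argument on
$$
\Phi(T) = \sup_{0 \le t \le T} \|\na u(t)\|_{L^2}^2 + \int_0^T \int \n |u_t|^2 \, dx \, dt.
$$
The maximum principle for the transport equation first gives $0 \le \n \le \bar{\n}$ and preserves $\|\n\|_{L^{3/2}}$, while testing the momentum equation by $u$ and invoking Korn's inequality on the exterior domain (valid under \eqref{Dir1} or \eqref{Navi}) yields the basic energy bound. The central estimate comes from testing with $u_t$, producing the identity
$$
\frac{d}{dt} \int \mn |D(u)|^2 \, dx + \int \n |u_t|^2 \, dx = -\int \n (u \cdot \na u) \cdot u_t \, dx + \text{terms in } \mu'(\n)\, \pa_t \n.
$$
The cubic right-hand side is tamed by Gagliardo-Nirenberg together with $\n \in L^{3/2}$, and smallness of $\Phi$ allows the bad terms to be absorbed by $\int \n |u_t|^2$; one arrives at $\Phi(T) \lesssim \|\na u_0\|_{L^2}^2$ whenever $\ve_0$ is small.

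Higher-order information on the density is then propagated by applying $\na$ to the transport equation, giving $(\pa_t + u \cdot \na)\na \n = -\na u \cdot \na \n$, with an analogous equation for $\na \mn$. Standard $L^p$ Gronwall estimates yield \eqref{oiq1} provided $\int_0^T \|\na u\|_{L^\infty} \, dt$ is controlled. For this, I would read the momentum equation as a stationary Stokes-type system $-\div(2\mn D(u)) + \na P = -\n u_t - \n u \cdot \na u$ and invoke $W^{1,p}$ regularity for the Stokes problem on exterior domains. Since $\na \mn \in L^q$ with $q>3$, the coefficient $\mn$ acts essentially as a multiplier at the $W^{1,p_0}$ level; combined with time-weighted estimates for $\sqrt{\n}\, u_t$ and $\na u_t$ (obtained by differentiating the momentum equation in $t$ and testing with $u_t$) and a Brezis-Wainger-type logarithmic bridge, this delivers the required control of $\na u$ in $L^1_t L^\infty_x$ on $[\tau, T]$.

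For the exponential decay \eqref{e}, the key observation is that on the exterior domain one has the weighted Poincar\'e-type inequality
$$
\int \n |u|^2 \, dx \le \|\n\|_{L^{3/2}} \|u\|_{L^6}^2 \le C \cn \|\na u\|_{L^2}^2,
$$
so the basic energy identity becomes a Gronwall inequality forcing $\int \n |u|^2 \, dx$ to decay like $e^{-\lm t}$. Bootstrapping this through the higher-order machinery built above produces exponentially decaying counterparts for $\|\na u\|_{H^1 \cap W^{1,p_0}}$, $\|P\|_{H^1 \cap W^{1,p_0}}$ and $\|\na u_t\|_{L^2}$, yielding \eqref{e}. The main obstacle is keeping constants in the higher-order estimates uniform in $t$ as $t \to \infty$ while simultaneously propagating the $L^q$ bound on $\na \mn$ globally in time: the absence of a Poincar\'e inequality on $u$ itself in the exterior setting forces one to rely throughout on the weighted form $\int \n |u|^2$ rather than $\int |u|^2$, and the interplay between this weighted decay and the vacuum regions of $\n$ is the delicate point at the heart of the argument.
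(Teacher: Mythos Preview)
Your outline has the right ingredients but the order of operations contains a circularity that the paper resolves differently, and this is the crux of the argument rather than a technicality you can defer.

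The issue is that the Stokes regularity you need (the $W^{2,r}$ estimate for $u$ with $r>3$, which feeds $\|\na u\|_{L^\infty}$) depends quantitatively on $\|\na\mn\|_{L^q}$; see Lemma~\ref{stokes}. But $\|\na\mn\|_{L^q}$ is only controlled via Gr\"onwall once you already have $\int_0^T\|\na u\|_{L^\infty}\,dt$ bounded --- and bounded \emph{small}, since you want the factor $\exp\{q\int_0^T\|\na u\|_{L^\infty}\}$ not to exceed $2$. Your bootstrap quantity $\Phi(T)$ does not include $\|\na\mn\|_{L^q}$, and you only claim $L^1_tL^\infty_x$ control of $\na u$ on $[\tau,T]$ via a Brezis--Wainger bridge, which is both the wrong interval (you need $[0,T]$, including the region $t\to 0$ where the time weights degenerate) and typically produces constants that grow with $T$. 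The acknowledgment in your final paragraph that this coupling is ``the main obstacle'' is accurate, but it is exactly the point that needs a proof, not a remark.

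The paper breaks the circularity by (i) putting $\|\na\mn\|_{L^q}\le 4M$ and $\int_0^T\|\na u\|_{L^2}^4\,dt\le 2\|\na u_0\|_{L^2}^2$ into the bootstrap hypothesis (Proposition~\ref{pr1}); (ii) proving the exponential decay \emph{inside} the bootstrap loop (Lemma~\ref{lem-ed1}) using the weighted Poincar\'e inequality you mention; and (iii) using that decay, together with the time-weighted estimate \eqref{ed3} and the Stokes bound \eqref{3d4.2} for some fixed $r\in(3,\min\{q,6\})$, to obtain directly
\[
\int_0^T\|\na u\|_{L^\infty}\,dt \le C\|\na u_0\|_{L^2}
\]
uniformly in $T$ (Lemma~\ref{lem5.1}). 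No logarithmic inequality is needed: Sobolev embedding from $W^{2,r}$ suffices, and the integrability of $\sigma^{-1/2}e^{-\lambda t/2}$ over $[0,\infty)$ is what swallows both the short-time singularity and the long-time tail. This smallness then closes the bootstrap on $\|\na\mn\|_{L^q}$ and, separately, on $\int_0^T\|\na u\|_{L^2}^4\,dt$. The exponential decay of the higher norms is a consequence, not the driver, of the argument.
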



 Furthermore, when the viscosity coefficient $\mu$ is a positive constant, we also have the following conclusion.
\begin{theorem}\label{thm2} Under the conditions of Theorem \ref{thm1} with $\mn\equiv\mu$ for some constant $\mu>0$, there exists some positive constant $\ve$ depending only on $\bar \n$ such that  if $\|\nabla u_0\|_{L^2}\le\mu\ve$, the system \eqref{1.1}--\eqref{inf1} has a unique  global strong solution to  satisfying \eqref{2.3} with $p_0=6$.  In addition, it holds that
 \be \la{oiq2} \sup_{0\le t<\infty}\|\na\n\|_{L^2}\le 2\|\na\n_0\|_{L^2}, \ee and that there is some positive constant   $\lambda$  depending only on $\|\n_0\|_{L^{3/2}}$ and $\mu$ such that  for $t\ge 1,$ \be \la{eq}
 \|\na u_t(\cdot,t)\|^2_{L^2}+
\|\na  u(\cdot,t)\|_{H^1\cap W^{1,6}}^2+\|P(\cdot,t)\|_{H^1\cap W^{1,6}}^2\le Ce^{-\lambda t} ,
\ee where   $C$ depends only on $\bar\n, \mu,$ $\|\n_0\|_{L^{3/2}}$,  $\|\na u_0   \|_{L^2},$ and $\|\na \n_0\|_{L^2}.$

\end{theorem}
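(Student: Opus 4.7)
The strategy is to combine a scaling argument with a specialized version of the proof of Theorem \ref{thm1}, exploiting the vanishing of $\na\mu(\n)$ when $\mu$ is constant.

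First, I would reduce to viscosity $\mu=1$ by scaling. If $(\n,u,P)$ solves \eqref{1.1}--\eqref{inf1} with constant viscosity $\mu$, then
\begin{equation*}
\tilde\n(x,t)=\n(x,t/\mu),\quad \tilde u(x,t)=\mu^{-1}u(x,t/\mu),\quad \tilde P(x,t)=\mu^{-2}P(x,t/\mu)
\end{equation*}
solves the same system on $\Omega$ with viscosity $1$ and initial data $(\n_0,\mu^{-1}m_0)$. Since $\bar\n$ and $\|\n_0\|_{L^{3/2}\cap H^1}$ are preserved while $\|\na\tilde u_0\|_{L^2}=\mu^{-1}\|\na u_0\|_{L^2}$, the hypothesis $\|\na u_0\|_{L^2}\le\mu\ve$ becomes $\|\na\tilde u_0\|_{L^2}\le\ve$. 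The decay rate $\lambda$ and the constant $C$ in \eqref{eq} are recovered by inverting the scaling, which accounts for their dependence on $\mu$.

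With $\mu=1$, the momentum equation reduces to the classical inhomogeneous Navier--Stokes system $\p_t(\n u)+\div(\n u\otimes u)-\lap u+\na P=0$, and on the exterior domain one has the standard Stokes estimate
\begin{equation*}
\|\na^2 u\|_{L^p}+\|\na P\|_{L^p}\le C(\Omega,p)\|\n u_t+\n u\cdot\na u\|_{L^p},
\end{equation*}
whose constant is independent of $\n$. Consequently every occurrence of $\|\na\mu(\n)\|_{L^q}$, $q$, or the ratio $\smu/\xmu$ in the proof of Theorem \ref{thm1} trivializes, and $p_0=6$ is admissible without restriction. I would then rerun the key a priori step of Theorem \ref{thm1} -- testing the momentum equation against $u_t$ and using the interpolation $\|\na u\|_{L^3}\lesssim\|\na u\|_{L^2}^{1/2}\|\na^2u\|_{L^2}^{1/2}$ to absorb nonlinear terms into $\|\sqrt\n\,u_t\|_{L^2}^2$ -- noting that the absorption constants now depend only on $\bar\n$, not on $\|\n_0\|_{L^{3/2}}$. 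A continuity argument then propagates $\|\na u(\cdot,t)\|_{L^2}^2\le 2\|\na u_0\|_{L^2}^2$ as soon as $\|\na u_0\|_{L^2}\le\ve(\bar\n)$.

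The bound \eqref{oiq2} on $\|\na\n\|_{L^2}$ then follows from the transport equation together with an $L^1(0,T;L^\infty)$ estimate on $\na u$ extracted from the higher-$L^p$ Stokes bound, while the remaining regularity in \eqref{2.3} and the exponential decay \eqref{eq} are inherited from the Lyapunov argument in the proof of Theorem \ref{thm1}, simplified by the vanishing of the $\na\mu(\n)$ terms; the $L^{3/2}$ norm of $\n_0$ reenters only through the rate $\lambda$ and the constant $C$ (via the Bogovskii-type lower bound on $\|\n u\|_{L^2}$). The principal obstacle is bookkeeping: one must verify that the smallness threshold $\ve$ really depends only on $\bar\n$. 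This rests squarely on the $\n$-independence of the Stokes constant on $\Omega$, which in the constant-viscosity regime decouples the closure of the $\|\na u\|_{L^2}$ bound from the $L^{3/2}$ norm of $\n_0$, and this is precisely what fails in the variable-viscosity setting of Theorem \ref{thm1}.
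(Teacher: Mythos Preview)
Your approach is essentially the same as the paper's, which simply says to ``modify slightly the proofs of Lemma \ref{lem-ed1} and \eqref{3d6.21}''. The scaling reduction to $\mu=1$ is a clean addition not made explicit in the paper, and your observation that with constant viscosity the Stokes estimate on $\Omega$ is $\rho$-independent is exactly the simplification the paper has in mind.

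There is, however, a gap in your continuity argument that you should be aware of (and that the paper's one-line proof does not address either). You correctly note that for constant $\mu$ the differential inequality from \eqref{z1.3}--\eqref{3d5},
\[
\frac{d}{dt}\|\na u\|_{L^2}^2+\|\sqrt\n\,u_t\|_{L^2}^2\le C(\bar\n)\|\na u\|_{L^2}^6,
\]
has a constant depending only on $\bar\n$. But this inequality alone does not propagate $\|\na u(\cdot,t)\|_{L^2}^2\le 2\|\na u_0\|_{L^2}^2$ \emph{globally} in time: integrating it over $[0,T]$ requires either a bound on $\int_0^T\|\na u\|_{L^2}^4\,dt$ (the paper's bootstrap \eqref{3a1}) or on $\int_0^T\|\na u\|_{L^2}^2\,dt$ (the basic energy). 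The paper closes the former via the exponential decay \eqref{ed2} and \eqref{3a1.2}, and the latter is $\le(2\mu)^{-1}\|\sqrt{\n_0}\,u_0\|_{L^2}^2$; both routes bring $\|\n_0\|_{L^{3/2}}$ back in, either through the rate $\lambda=3\xmu/(4\|\n_0\|_{L^{3/2}})$ or through $\|\sqrt{\n_0}\,u_0\|_{L^2}^2\le C\|\n_0\|_{L^{3/2}}\|\na u_0\|_{L^2}^2$. So the step ``decouples the closure of the $\|\na u\|_{L^2}$ bound from the $L^{3/2}$ norm of $\n_0$'' is not justified by what you wrote: the Stokes constant is decoupled, but the time-integrability needed for the bootstrap is not. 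If you want $\ve$ to depend only on $\bar\n$ as stated, you need an additional argument beyond both your sketch and the paper's.
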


\begin{remark} It should be noted here that our  Theorem \ref{thm1} holds
 for any function $\mn$ satisfying \eqref{n3} and for initial density allowed to be arbitrarily large and vacuum under a smallness   assumption  only  on the $L^2$-norm of the gradient of initial velocity.
  \end{remark}

\begin{remark}  Compared with the results of  Guo-Wang-Xie  \cite{GWX1} where they  obtained the global strong axisymmetric solutions for the system \eqref{1.1} in the exterior of a cylinder subject to the Dirichlet boundary conditions,  we only require that the region is any simply connected bounded smooth domain provided    the gradient of initial velocity is suitably small.
\end{remark}

We now make some comments on  the analysis in this paper. The idea mainly comes from the article \cite{HLL2021}. However, because the boundary conditions must be taken into account in our case, we still need some new technical methods to overcome the difficulties caused by them. First, we utilize the cut-off function to combine a priori estimates of the Stokes problem in a bounded domain and the whole space to obtain a priori estimates necessary in the outer region. One can see Lemma \ref{stokes} for details. Next, some exponential decays of the quantities related to density and velocity such as $\|\n^{1/2}u(\cdot,t)\|^2_{L^2}$ and $\|\na u(\cdot,t)\|^2_{L^2}$ are estalbished (see Lemma \ref{lem-ed1}), which are the key to   obtain  the desired  uniform  bound (with respect to time) on the  $L^1(0,T; L^\infty)$-norm
of $\na u$. Finally, based on the a priori estimates we have gotten, we succeed in extending the local  strong solutions whose existence is obtained by Lemma \ref{local} globally in time.

The rest of this paper is organized as follows.  Some  facts and elementary inequalities are collected in Section \ref{sec2}. Section \ref{sec3} is devoted to deriving  a priori estimates of the system \eqref{1.1}. Finally, we will give the proofs of  Theorems  \ref{thm1} and \ref{thm2} in Section 4.

\section{Preliminaries}\label{sec2}
In this sectionn, some facts and elementary inequalities, which will be used frequently later, are collected.

We start with the local existence of strong solutions which has  been proved   in \cite{hls}.
\begin{lemma}\label{local}
Assume that $(\rho_0, u_0)$ satisfies   \eqref{2.2}. Then there exist a time $T_0>0$ and a unique strong solution $(\rho, u, P)$ to the system \eqref{1.1}--\eqref{inf1} in $\Omega\times(0,T_0)$ satisfying \eqref{2.3}.
\end{lemma}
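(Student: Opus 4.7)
My plan is to construct the solution via a Picard-type iteration combined with a mild regularization of the initial density to circumvent the degeneracy caused by vacuum. For each small $\delta>0$ I would replace $\rho_0$ by $\rho_0^\delta=\rho_0+\delta$, keep $u_0$ unchanged, and solve the resulting non-degenerate problem; the original solution is then recovered by passing to $\delta\to 0^+$ with uniform-in-$\delta$ estimates. For fixed $\delta$ the iteration alternates between a transport step and a Stokes step: given $u^k$, first obtain $\rho^{k+1}$ from $\p_t\rho^{k+1}+\div(\rho^{k+1}u^k)=0$ with $\rho^{k+1}(\cdot,0)=\rho_0^\delta$ via characteristics (or the DiPerna--Lions theory), which yields $\rho^{k+1}\in C([0,T];H^1)$ and $\na\mu(\rho^{k+1})\in C([0,T];L^q)$ with bounds governed by $\|\na u^k\|_{L^1_tL^\infty_x}$; then solve the linear Stokes-type problem
\[
\rho^{k+1}\p_t u^{k+1}+\rho^{k+1}(u^k\cdot\na)u^{k+1}-\div(2\mu(\rho^{k+1})D(u^{k+1}))+\na P^{k+1}=0,\qquad\div u^{k+1}=0,
\]
in $\Omega$ subject to either \eqref{Dir1} or \eqref{Navi} together with \eqref{inf1} and $u^{k+1}(\cdot,0)=u_0$, via a Galerkin scheme based on the spectral basis of the Stokes operator on the exterior domain.

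The analytical core is to prove that these iterates admit bounds uniform in $k$ and $\delta$ on some small interval $[0,T_0]$. The natural energy identity delivers $\|\sqrt{\rho^{k+1}}\,u^{k+1}\|_{L^\infty_tL^2}$ and $\|\na u^{k+1}\|_{L^2_tL^2}$. To promote these to the higher regularity listed in \eqref{2.3}, I would differentiate the momentum equation in $t$, test against $u_t^{k+1}$, and close the argument with the exterior-domain Stokes estimate obtained by cutting the domain into a bounded smooth piece (handled by the classical Stokes theory with boundary) and an exterior piece (handled by whole-space theory), in the spirit of the Stokes lemma the authors will invoke later. Because no compatibility condition is imposed at $t=0$, the higher-order estimates must be weighted by suitable powers of $t$: for example $\sqrt{t}\,\sqrt{\rho^{k+1}}\,u_t^{k+1}\in L^\infty_tL^2$ and $t\,\na u_t^{k+1}\in L^2_tL^2$, which when restricted to $[\tau,T]$ with $\tau>0$ give precisely the norms appearing in \eqref{2.3}.

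With the uniform bounds in hand, a contraction argument in a weaker norm (say $u^{k+1}-u^k$ in $L^\infty_tL^2\cap L^2_tH^1$ and $\rho^{k+1}-\rho^k$ in $L^\infty_tL^2$) produces convergence of the iteration to a solution of the regularized problem, and the limit $\delta\to 0^+$ is then taken via weak-$*$ convergence of the density together with Aubin--Lions strong compactness of the velocity, the bound $0\le\rho\le\bar\n$ being preserved in the process. Uniqueness follows from a Gronwall argument on the difference of two solutions tested in $L^2$, using the transport structure of the continuity equation. The main difficulty I foresee is that the variable viscosity $\mu(\rho)$, which is only $L^\infty$ with $\na\mu(\rho)\in L^q$, generates commutator terms of the form $\na\mu(\rho)\cdot\na u$ when one tries to extract $W^{2,p}$ regularity of $u$ from the Stokes inequality on the exterior domain; these must be treated as lower-order perturbations and absorbed into the leading term by exploiting the smallness of $T_0$, which is delicate because the $L^q$-norm of $\na\mu(\rho)$ grows only slowly in time but does not shrink as $T_0\downarrow 0$.
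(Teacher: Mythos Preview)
The paper does not actually prove this lemma. It is stated with the prefatory sentence ``the local existence of strong solutions which has been proved in \cite{hls}'' and no argument is given; the result is simply imported from L\"u--Song (2019). So there is nothing in the paper to compare your proposal against beyond a bare citation.

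That said, your outline is a faithful sketch of the standard machinery that \cite{hls} (and its predecessors Cho--Kim \cite{kim2004}, Choe--Kim \cite{kim2003}) implement: regularize the density to kill the vacuum degeneracy, linearize by a transport/Stokes iteration, close uniform-in-$k$ and uniform-in-$\delta$ bounds on a short interval, contract in a weaker topology, and pass to the limit. You correctly flag the two genuinely delicate points: (i) the absence of a compatibility condition on $(\rho_0,u_0)$ forces time-weighted higher-order estimates, which is exactly why \eqref{2.3} is stated on $[\tau,T]$ rather than $[0,T]$ for the $W^{1,p_0}$ and $u_t$ norms; and (ii) the commutator $\na\mu(\rho)\cdot D(u)$ arising from variable viscosity, which in the cited work is handled precisely by the perturbative Stokes estimate that the present paper records as Lemma~\ref{stokes}. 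Your remark that this term ``does not shrink as $T_0\downarrow 0$'' is accurate; it is absorbed not by smallness of the time interval directly but by interpolation and Young's inequality against the leading $\|\na^2 u\|_{L^r}$, exactly as in \eqref{3rd3'}. In short, your plan is correct and aligned with the literature, but for the purposes of this paper a one-line citation is all that is required.
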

The following lemma  can be found in \cite{fcpm}.
\begin{lemma}
[Gagliardo-Nirenberg]\la{l1} Assume that $\Omega$ is the exterior of a simply connected domain $D$ in $\r^3$. For  $p\in [2,6],\,q\in(1,\infty) $ and
$ r\in  (3,\infty),$ there exists some generic
 constant
$C>0$ which may depend  on $p,q $ and $r$ such that for  any $f\in H^1({\Omega }) $
and $g\in  L^q(\Omega  )\cap D^{1,r}(\Omega ), $
\be\la{g1}\|f\|_{L^p(\Omega )}\le C\|f\|_{L^2}^{\frac{6-p}{2p}}\|\na
f\|_{L^2}^{\frac{3p-6}{2p}},\ee
\be\la{g2}\|g\|_{C\left(\ol{\Omega  }\right)} \le C
\|g\|_{L^q}^{q(r-3)/(3r+q(r-3))}\|\na g\|_{L^r}^{3r/(3r+q(r-3))}.
\ee
\end{lemma}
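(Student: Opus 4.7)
Both (g1) and (g2) are standard Gagliardo--Nirenberg-type interpolation inequalities, so my plan is to reduce them to their classical $\r^3$ versions via an extension argument. Because $\partial\Omega=\partial D$ is smooth and bounded, a linear extension operator $E:W^{k,s}(\Omega)\to W^{k,s}(\r^3)$ (for the relevant $(k,s)$) can be constructed via local flattening and reflection in a tubular neighborhood of $\partial\Omega$, patched with a cutoff that leaves $f$ untouched outside a fixed ball---this is the Stein-type extension available in the exterior-of-smooth-bounded-obstacle setting.

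For (g1), the main step is the endpoint Sobolev embedding $\|f\|_{L^6(\Omega)}\le C\|\nabla f\|_{L^2(\Omega)}$, obtained from the classical inequality $\|v\|_{L^6(\r^3)}\le C\|\nabla v\|_{L^2(\r^3)}$ applied to $Ef$, provided the extension is arranged so that $\|\nabla Ef\|_{L^2(\r^3)}\le C\|\nabla f\|_{L^2(\Omega)}$ with no lower-order $L^2$ term. This is possible because $\Omega$ is three-dimensional and unbounded, so a Hardy-type inequality at infinity lets one absorb any nominally lower-order contribution. Once this endpoint is established, for $p\in(2,6)$ I would write $\tfrac{1}{p}=\tfrac{\alpha}{2}+\tfrac{1-\alpha}{6}$ with $\alpha=(6-p)/(2p)$ and apply H\"older's inequality,
\[
\|f\|_{L^p}\le \|f\|_{L^2}^{\alpha}\|f\|_{L^6}^{1-\alpha},
\]
then combine with the endpoint; the identity $1-\alpha=(3p-6)/(2p)$ yields (g1). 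The $p=2$ and $p=6$ cases are trivial or reduce to the endpoint.

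For (g2), I would first combine Morrey's embedding $W^{1,r}(\r^3)\hookrightarrow C^{0,1-3/r}(\r^3)$ (valid since $r>3$) with the $L^q$ bound to obtain the crude inequality $\|g\|_{L^\infty(\r^3)}\le C(\|g\|_{L^q}+\|\nabla g\|_{L^r})$. The sharp exponents then come from scaling: for $g_\lambda(x):=g(\lambda x)$ we have $\|g_\lambda\|_{L^q}=\lambda^{-3/q}\|g\|_{L^q}$, $\|\nabla g_\lambda\|_{L^r}=\lambda^{1-3/r}\|\nabla g\|_{L^r}$, and $\|g_\lambda\|_{L^\infty}=\|g\|_{L^\infty}$. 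Applying the crude estimate to $g_\lambda$ and optimizing in $\lambda>0$ to balance the two right-hand terms produces exactly the exponents $q(r-3)/(3r+q(r-3))$ and $3r/(3r+q(r-3))$ of (g2) on $\r^3$; the exterior case then follows by applying $E$ to $g$.

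The main obstacle is ensuring that the extension operator preserves the \emph{homogeneous} seminorms $\|\nabla\cdot\|_{L^2}$ and $\|\nabla\cdot\|_{L^r}$ without introducing lower-order terms---a routine Stein extension only preserves the full inhomogeneous Sobolev norm. In the exterior-of-bounded-smooth-domain setting this is nevertheless standard, requiring a careful cutoff near $\partial\Omega$ combined with decay estimates at infinity; the paper cites \cite{fcpm} precisely because these constructions are thoroughly documented there.
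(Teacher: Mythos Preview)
The paper does not actually prove this lemma: it simply records the two inequalities and cites \cite{fcpm} (Crispo--Maremonti) for them. So there is no ``paper's own proof'' to compare against---the authors treat these as known background facts.

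Your outline is the standard way such interpolation inequalities are established on exterior domains and is essentially correct. The H\"older interpolation for (g1) and the scaling-optimization for (g2) are exactly right, and you have correctly identified the only real issue: the extension operator must control the \emph{homogeneous} seminorms $\|\nabla\cdot\|_{L^2}$ and $\|\nabla\cdot\|_{L^r}$ without contaminating them with lower-order norms, since otherwise the scaling trick on $\r^3$ would not yield a clean product on $\Omega$. For exterior domains of smooth bounded obstacles this can indeed be arranged (the obstacle is compact, so the extension is local near $\partial\Omega$ and trivial at infinity, and Hardy/Sobolev inequalities on the exterior region absorb any boundary-localized zeroth-order contributions), and this is precisely the content of the cited reference. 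So your proposal is sound; it is simply more than the paper itself supplies.
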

Generally, \eqref{g1} and \eqref{g2} are called Gagliardo-Nirenberg's inequalities.

The following  Lemmas \ref{crle1}-\ref{crle3} show the control of $\na v$  by means of $\div v$ and $\curl v$ which are very important in our later discussion.
\begin{lemma}  \cite[Theorem 3.2]{vww} \la{crle1}
Let $D$ be a simply connected domain in $\r^3$ with $C^{1,1}$ boundary, and $\Omega$ is the exterior of $D$. For $v\in D^{1,q}(\Omega)$ with $v\cdot n=0$ on $\partial\Omega$, it holds that
\be\la{ljq01}\|\nabla v\|_{L^{q}(\Omega)}\leq C(\|\div v\|_{L^{q}(\Omega)}+\|\curl v\|_{L^{q}(\Omega)})\,\,\,for\,\, any\,\, 1<q<3,\ee
and
$$\|\nabla v\|_{L^q(\Omega)}\leq C(\|\div v\|_{L^q(\Omega)}+\|\curl v\|_{L^q(\Omega)}+\|\nabla v\|_{L^2(\Omega)})\,\,\,for\,\, any\,\, 3\leq q<+\infty.$$
\end{lemma}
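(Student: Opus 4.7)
The plan is to build the estimate from the vector identity
\[
-\Delta v = \operatorname{curl}\operatorname{curl} v - \nabla\operatorname{div} v,
\]
combined with the boundary condition $v\cdot n=0$ on $\partial\Omega$ and the decay at infinity built into $D^{1,q}(\Omega)$. The natural route is through a Helmholtz-type decomposition $v = \nabla\varphi + w$ in the exterior domain, where $\varphi$ solves the Neumann problem $\Delta\varphi=\operatorname{div} v$ with $\partial_n\varphi=0$ on $\partial\Omega$ (and $\nabla\varphi\to 0$ at infinity), and $w$ is solenoidal with $w\cdot n=0$ on $\partial\Omega$ and $\operatorname{curl} w=\operatorname{curl} v$. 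The simple-connectedness of the obstacle $D$ (equivalently, the triviality of the relevant cohomology of $\Omega$) is what rules out harmonic Neumann fields and makes this decomposition unique.

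First I would handle the gradient part. Standard elliptic regularity for the exterior Neumann problem (see e.g.\ Galdi's monograph on stationary Navier--Stokes, or Simader--Sohr) yields $\|\nabla^2\varphi\|_{L^q(\Omega)}\le C\|\operatorname{div} v\|_{L^q(\Omega)}$, with a genuine bound on $\nabla\varphi$ alone only in the subcritical range $1<q<3$, where Hardy/Sobolev inequalities in the exterior allow one to recover $\nabla\varphi$ from $\nabla^2\varphi$ without adding lower-order terms. For the solenoidal remainder I would pass to a vector potential $\psi$ with $\operatorname{curl}\psi=w$, $\operatorname{div}\psi=0$ and appropriate boundary/gauge conditions, so that $-\Delta\psi=\operatorname{curl} v$; the same elliptic theory and simple-connectedness give $\|\nabla^2\psi\|_{L^q}\le C\|\operatorname{curl} v\|_{L^q}$, and hence $\|\nabla w\|_{L^q}\le C\|\operatorname{curl} v\|_{L^q}$ for $1<q<3$. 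Summing the two contributions produces the first inequality \eqref{ljq01}.

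For $3\le q<\infty$ the obstruction is that $L^q$ on the exterior of a bounded set no longer embeds into any space that controls the lower-order part of $\nabla\varphi$ or $\nabla\psi$ from the pure second-derivative estimate, so one cannot avoid a global control term. Here I would run a cut-off argument: pick a large ball $B_R$ containing $\partial D$, and write $v=\chi v + (1-\chi)v$ with a smooth $\chi$. On the bounded piece $\chi v$ one applies the known interior/boundary div--curl estimate in $\Omega\cap B_{2R}$, which costs an $\|v\|_{L^q(B_{2R})}$ term that is dominated by $\|\nabla v\|_{L^2}$ through Sobolev embedding in the bounded region. On the exterior piece $(1-\chi)v$ one is reduced to a whole-space div--curl estimate, for which the $L^q$ bound $1<q<\infty$ is classical via singular integrals; the commutator generated by the cut-off again produces lower-order terms controlled by $\|\nabla v\|_{L^2}$ after interpolation with the already-established $q=2$ case.

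The main obstacle I expect is the interplay between the exterior boundary condition $v\cdot n=0$ and the behavior at infinity in $L^q$ for $q\ge 3$: the Neumann/vector-potential problems in exterior domains are not scale-invariant, and the usual $L^q$ theory requires careful choice of weighted spaces or of a gauge in order to produce a genuinely quantitative estimate. Once this is set up, verifying the simply-connected topology eliminates cohomological obstructions, and the two-scale cut-off argument cleanly splits the analysis into the bounded-domain and whole-space pieces where standard theory applies.
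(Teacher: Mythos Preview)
The paper does not prove this lemma at all: it is simply quoted from Von Wahl \cite[Theorem 3.2]{vww} and used as a black box. There is therefore no ``paper's own proof'' to compare against.

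Your sketch is a reasonable outline of the standard div--curl theory and captures the right ingredients: the identity $-\Delta v=\operatorname{curl}\operatorname{curl} v-\nabla\operatorname{div} v$, a Helmholtz splitting with the Neumann scalar potential and a vector potential for the solenoidal part, the role of simple-connectedness in killing harmonic Neumann fields, and the distinction between $1<q<3$ (where Sobolev/Hardy recovers first derivatives from second) and $q\ge 3$ (where a lower-order term is unavoidable). The cut-off argument splitting into a bounded-domain piece and a whole-space piece is exactly how such exterior estimates are typically assembled. If you want to turn this into a rigorous proof rather than a plan, the delicate point you flag yourself---the $L^q$ solvability of the exterior Neumann and vector-potential problems with the right decay and gauge---is where the real work lies; this is precisely what Von Wahl's paper (and the related literature of Simader--Sohr and Galdi) carries out in detail, and the present paper simply imports that result.
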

\begin{lemma} \cite[Theorem 5.1]{lhm} \la{crle2}
Let $\Omega$ be given in Lemma \ref{crle1}, $1<q<+\infty$, for any $v\in W^{1,q}(\Omega)$ with $v\times n=0$ on $\partial\Omega$, it holds that
$$\|\nabla v\|_{L^q(\Omega)}\leq C(\|v\|_{L^q(\Omega)}+\|\div v\|_{L^q(\Omega)}+\|\curl v\|_{L^q(\Omega)}).$$
\end{lemma}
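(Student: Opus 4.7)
The plan is to reduce the statement to two standard pieces by a cutoff decomposition: one on the whole space $\r^3$ and one on a bounded domain. Fix $R_0>0$ large enough that $\bar D\subset B_{R_0}$, choose $\chi\in C_c^\infty(\r^3)$ with $\chi\equiv 1$ on $B_{R_0+1}$ and $\chi\equiv 0$ outside $B_{R_0+2}$, and split $v=v_1+v_2$ with $v_1:=\chi v$ and $v_2:=(1-\chi)v$. Since $\chi$ is constant ($=1$) on a neighborhood of $\partial\Omega$, both pieces still satisfy $v_i\times n=0$ there.

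For $v_2$, note that it vanishes near $\partial\Omega$, so extending by zero gives $v_2\in W^{1,q}(\r^3)$. I would then invoke the whole-space inequality
\[
\|\na v_2\|_{L^q(\r^3)}\le C\bigl(\|\div v_2\|_{L^q(\r^3)}+\|\curl v_2\|_{L^q(\r^3)}\bigr),
\]
which follows from the identity $-\Delta v_2=\curl\curl v_2-\na\div v_2$ together with the Calderon-Zygmund bound $\|\na(-\Delta)^{-1}\na f\|_{L^q}\le C\|f\|_{L^q}$ valid for $1<q<\infty$. The commutator computations $\div v_2=(1-\chi)\div v-\na\chi\cdot v$ and $\curl v_2=(1-\chi)\curl v-\na\chi\times v$ produce only lower-order terms bounded pointwise by $|\na\chi|\,|v|$, and hence by $C\|v\|_{L^q(\Omega)}$.

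For $v_1=\chi v$, which is supported in the bounded Lipschitz subdomain $\Omega_0:=\Omega\cap B_{R_0+2}$, the boundary $\p\Omega_0$ splits into $\p D$, on which $v_1\times n=v\times n=0$, and $\p B_{R_0+2}$, on which $v_1\equiv 0$ so $v_1\times n=0$ trivially. Thus $v_1\times n=0$ on the whole of $\p\Omega_0$, and I would apply the classical bounded-domain div-curl estimate
\[
\|\na v_1\|_{L^q(\Omega_0)}\le C\bigl(\|v_1\|_{L^q}+\|\div v_1\|_{L^q}+\|\curl v_1\|_{L^q}\bigr),
\]
proved by writing $-\Delta v_1=\curl\curl v_1-\na\div v_1$ and invoking $L^q$-elliptic regularity for the vector Laplacian under the boundary condition $v_1\times n=0$, e.g., via the equivalent stationary Maxwell-type formulation. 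Commutators again contribute only $\|v\|_{L^q(\Omega)}$-terms.

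Summing the two estimates and noting that $\|\na v\|_{L^q(\Omega)}\le\|\na v_1\|_{L^q}+\|\na v_2\|_{L^q}$ gives the claim. The main obstacle I anticipate is the bounded-domain piece: the boundary condition $v_1\times n=0$ is not Dirichlet, so the sharp $L^q$ elliptic estimate for the Hodge Laplacian with this relative boundary condition requires either a direct potential-theoretic construction adapted to tangential boundary data or a careful reduction to the corresponding Maxwell system; securing the full range $1<q<\infty$ uniformly, without a compactness-induced loss of a lower-order term on the right-hand side, is the most delicate point.
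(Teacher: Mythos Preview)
The paper does not prove this lemma at all; it is simply quoted from \cite[Theorem~5.1]{lhm} with no argument, so there is no in-paper proof to compare your proposal against. Your cutoff decomposition into a whole-space piece and a bounded-domain piece is a correct and standard route to such exterior div--curl inequalities: the identity $-\Delta w=\curl\curl w-\na\div w$ together with Riesz-transform bounds handles $v_2$, and the bounded-domain $L^q$ div--curl estimate under the tangential condition $w\times n=0$ (available e.g.\ in the work of Kozono--Yanagisawa or Amrouche--Seloula for smooth domains) handles $v_1$ on $\Omega_0=\Omega\cap B_{R_0+2}$, whose two boundary components are indeed smooth and disjoint. You have correctly flagged the bounded-domain step as the nontrivial ingredient; since that inequality is itself a documented result, your outline closes, and the lower-order term $\|v\|_{L^q}$ you retain is exactly what absorbs the commutators and any finite-dimensional kernel. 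In short, the paper defers entirely to the cited reference, while you supply a valid self-contained sketch.
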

\begin{lemma} \cite[Lemma 2.9]{CL1}\la{crle3}
Let $D$ be a simply connected domain in $\r^3$ with smooth boundary, and $\Omega$ is the exterior of $D$. For any  $p\in[2,6] $ and    integer $k\geq 0,$  there exists some positive constant $C$ depending only on $p$, $k$ and $D$ such that   every $v\in \{D^{k+1,p}(\Omega)\cap D^{1,2}(\Omega)| v(x,t)\rightarrow 0  \mbox{ as } |x|\rightarrow\infty  \}$  with $v\cdot n|_{\partial\Omega}=0$ or $v\times n|_{\partial\Omega}=0$   satisfies
\be\la{uwkq}\ba\|\nabla v\|_{W^{k,p}(\Omega)}\leq C(\|\div v\|_{W^{k,p}(\Omega)}+\|\curl v\|_{W^{k,p}(\Omega)}+\|\nabla v\|_{L^2(\Omega)}).\ea\ee
\end{lemma}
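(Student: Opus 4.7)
The plan is to reduce the exterior-domain estimate to two well-understood flat cases — a bounded-domain Hodge estimate up to the physical boundary $\partial D$ and a whole-space div--curl estimate — by a smooth cutoff, and then to induct on $k$. Fix $R_0>0$ so that $\bar D\subset B_{R_0}$ and choose $\phi\in C_c^\infty(\r^3)$ with $\phi\equiv 1$ on $B_{R_0}$ and $\phi\equiv 0$ outside $B_{2R_0}$. Decompose $v=v_1+v_2$ with $v_1=\phi v$, $v_2=(1-\phi)v$. Then $v_1$ is supported in the bounded set $\Omega_0:=\Omega\cap B_{2R_0}$ and inherits the original condition ($v_1\cdot n=0$ or $v_1\times n=0$) on $\partial D$; after extension by zero through $\partial B_{2R_0}$ it lives on the $C^{1,1}$ bounded domain $\Omega_0$. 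The function $v_2$ vanishes on $\Omega\cap B_{R_0}$, and thus extends by zero to a function on $\r^3$ which still decays at infinity.

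For the base case $k=0$, the case $p=2$ is trivial. For $p\in(2,6]$ I would apply Lemma \ref{crle1} (for $v\cdot n=0$) or Lemma \ref{crle2} (for $v\times n=0$) to $v_1$ on $\Omega_0$, together with the classical Calder\'on--Zygmund/Biot--Savart bound $\|\nabla v_2\|_{L^p(\r^3)}\le C(\|\div v_2\|_{L^p(\r^3)}+\|\curl v_2\|_{L^p(\r^3)})$ for $v_2$. The lower-order $\|v_1\|_{L^p(\Omega_0)}$ appearing in Lemma \ref{crle2} is handled via H\"older on the bounded set $\Omega_0$ and the global Sobolev embedding $\|v\|_{L^6(\Omega)}\le C\|\nabla v\|_{L^2(\Omega)}$ (valid since $v\to 0$ at infinity), which bounds it by $\|\nabla v\|_{L^2(\Omega)}$. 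The commutator terms $\div v_1=\phi\,\div v+\nabla\phi\cdot v$ and $\curl v_1=\phi\,\curl v+\nabla\phi\times v$ (and their analogues for $v_2$) live on the fixed annulus $B_{2R_0}\setminus B_{R_0}$, so the extra pieces are dominated by $\|\nabla v\|_{L^2(\Omega)}$ in the same way.

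For the inductive step from $k-1$ to $k$, I rerun the same cutoff argument, this time invoking the higher-order Hodge estimate on the bounded domain $\Omega_0$ and the higher-order Riesz transform estimate on $\r^3$. The commutators now carry up to $k$ derivatives of $\phi$ against $v$ and its derivatives, all supported in the fixed annulus; they are of strictly lower differential order in $v$ and are absorbed through the induction hypothesis together with a local Sobolev interpolation on the annulus, ending in $\|\nabla v\|_{L^2(\Omega)}$.

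The main obstacle I anticipate is the interaction between higher derivatives and the curved boundary condition: differentiation does not preserve $v\cdot n=0$ or $v\times n=0$ on $\partial D$, so one cannot simply apply the base case componentwise to $\partial^\alpha v$. The cutoff reduction sidesteps this by keeping $v_1$'s behavior on $\partial D$ unchanged while moving $v_2$ into $\r^3$ where no boundary condition is needed; the price is the chain of commutator remainders generated at each inductive step, and the careful bookkeeping needed to absorb each such remainder with its final piece being $\|\nabla v\|_{L^2(\Omega)}$, rather than some stronger norm not allowed by the statement.
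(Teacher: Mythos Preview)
The paper does not prove this lemma; it is quoted verbatim as \cite[Lemma 2.9]{CL1} and used as a black box. So there is no paper proof to compare against.

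Your cutoff-plus-induction strategy is the standard route and is essentially how the result is proved in the cited reference. One technical slip: Lemmas \ref{crle1} and \ref{crle2} as stated in this paper are for the \emph{exterior} domain $\Omega$, not for the bounded domain $\Omega_0=\Omega\cap B_{2R_0}$, so you cannot invoke them directly on $v_1$. You need instead the bounded-domain Hodge/div--curl estimates (classical; e.g.\ von Wahl \cite{vww} or Galdi \cite{N1959}), which give $\|\nabla v_1\|_{W^{k,p}(\Omega_0)}\le C(\|\div v_1\|_{W^{k,p}}+\|\curl v_1\|_{W^{k,p}}+\|v_1\|_{L^p})$ under either boundary condition on $\partial\Omega_0$. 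With that substitution your outline goes through: the commutator terms live on the fixed annulus and are of strictly lower order, so they feed into the induction and terminate at $\|\nabla v\|_{L^2(\Omega)}$ via $\|v\|_{L^6(\Omega)}\le C\|\nabla v\|_{L^2(\Omega)}$ and H\"older on the compact annulus.
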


Considering the Stokes problem
 \be\ba\label{stokes1}
\begin{cases}
-\Delta v  +\nabla \pi=f+\div F, \,\, &x\in\Omega, \\
\div v=\chi\,\, &x\in\Omega.
\end{cases}
\ea\ee
where $\Omega$ is a smooth domain in $\r^3$. Thanks to  \cite{GS1990}, \cite{BM1}, and \cite{AACG2019}, we have the following conclusions about a prior estimates for the problem \eqref{stokes1}.
\begin{lemma}\cite[Theorem 3.1]{GS1990} \la{wse1} Let $\Omega=\r^3$, for the problem \eqref{stokes1} with $f\in W^{-1,q}$ and $F,\chi\in L^q$,  $1<q<+\infty$, there exists a unique distributional solution $v\in W^{1,q}$ such that
\be \la{sswse1}\|\na v\|_{L^q}+ \|\pi\|_{L^q}\le  C(q)  \left(\|f\|_ {W^{-1,q}}+ \|F\|_{L^q}+\|\chi\|_{L^q}\right).\ee
\end{lemma}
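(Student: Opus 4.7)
The plan is to solve the Stokes system \eqref{stokes1} explicitly by Fourier analysis in $\r^3$ and to reduce the $L^q$-bounds to the Calder\'on--Zygmund / H\"ormander--Mihlin multiplier theory. Since $\Omega=\r^3$ has no boundary, the argument is purely one of singular integral estimates for symbols of the form $\xi_i\xi_j/|\xi|^2$, and I would not attempt to sidestep this classical input.

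First, I would take the divergence of the momentum equation and combine it with $\div v=\chi$ to obtain $\Delta\pi=\Delta\chi+\div f+\p_i\p_j F_{ij}$ in $\mathcal D'(\r^3)$. Inverting the Laplacian (modulo harmonic polynomials, which are ruled out by the requirement $\pi\in L^q$) gives the representation
$$\pi=\chi+(-\Delta)^{-1}\div(-f)+R_iR_jF_{ij},$$
where $R_i=\p_i(-\Delta)^{-1/2}$ are the Riesz transforms. To handle the term with $f$, one writes $f=f_0+\div f_1$ with $\|f_0\|_{L^q}+\|f_1\|_{L^q}\le C\|f\|_{W^{-1,q}}$, so that $(-\Delta)^{-1}\div f$ again reduces to $R_iR_j$-type operators acting on $L^q$-data. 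Mihlin's theorem then gives the bound $\|\pi\|_{L^q}\le C(\|f\|_{W^{-1,q}}+\|F\|_{L^q}+\|\chi\|_{L^q})$.

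Substituting $\pi$ back into the momentum equation, one writes
$$\na v=\na(-\Delta)^{-1}(f+\div F)-\na(-\Delta)^{-1}\na\pi,$$
and the same decomposition $f=f_0+\div f_1$ reveals each entry of $\na v$ as a finite sum of $R_iR_j$ operators applied to $f_0,\,f_1,\,F,\,\chi$. A second appeal to Mihlin's theorem yields the claimed $L^q$-bound on $\na v$. Uniqueness follows from a Liouville-type argument: a distributional solution to the homogeneous Stokes problem with $\na v,\pi\in L^q(\r^3)$ has Fourier transforms supported at the origin, so $v$ is a polynomial whose gradient lies in $L^q(\r^3)$, forcing $\na v\equiv 0$ and $\pi\equiv 0$. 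The main technical obstacle---and the one essential input---is the $L^q$-boundedness of second-order Riesz transform compositions for $1<q<\infty$, which is the classical Calder\'on--Zygmund / Mihlin theorem; everything else in the argument is bookkeeping about how the $W^{-1,q}$-data $f$ can be massaged into $L^q$-data acted upon by such singular integrals.
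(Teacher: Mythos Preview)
The paper does not prove this lemma; it is simply quoted from Galdi--Simader \cite[Theorem~3.1]{GS1990}. Your Fourier-analytic route via the Leray projector and Mihlin/Calder\'on--Zygmund theory is exactly the classical proof, and the outline (solve for $\pi$ by taking the divergence, then recover $\na v$, then Liouville for uniqueness) is correct.

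One point does need fixing. Your representation $f=f_0+\div f_1$ with $f_0,f_1\in L^q$ is the decomposition appropriate to the \emph{inhomogeneous} dual $(W^{1,q'})^*$. But the operator $(-\Delta)^{-1}\div$ acting on the $f_0$-piece has symbol $i\xi/|\xi|^2$, which is of order $-1$, not $0$; it is \emph{not} a product of Riesz transforms and is not bounded $L^q(\r^3)\to L^q(\r^3)$. Concretely, if $f_0\in C_c^\infty$ has a component with nonzero integral, then $\pi=(-\Delta)^{-1}\div f_0$ decays only like $|x|^{-2}$ and lies outside $L^q$ for $1<q\le 3/2$, so the estimate would fail. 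The resolution (and what Galdi--Simader actually use) is that on $\r^3$ the relevant data space is the \emph{homogeneous} dual $D^{-1,q}_0=(D^{1,q'}_0)^*$, in which every element is of the form $f=\div f_1$ with $f_1\in L^q$ and no zeroth-order term appears. With that reading your argument goes through verbatim: every multiplier that shows up is a finite combination of $\xi_i\xi_j/|\xi|^2$, and Mihlin's theorem gives \eqref{sswse1}. So the only correction is to drop $f_0$ and work in the homogeneous negative Sobolev space; the rest of your sketch, including the Liouville-type uniqueness argument, is fine.
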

\begin{lemma} \cite[Theorem 2.1]{GS1990}\la{eid1} Let $\Omega$ be a  bounded smooth domain in $\r^3$, $f\in W^{-1,q}$ and $F,\chi\in L^q$, $1<q<+\infty$, for the problem \eqref{stokes1} with \eqref{Dir1}, it holds that
\be \la{ssi1}\|\na v\|_{L^q}+ \|\pi-\bar{\pi}\|_{L^q}\le  C(q,\Omega)  \left( \|f\|_ {W^{-1,q}}+\|F\|_{L^q}+\|\chi\|_{L^q}\right).\ee
\end{lemma}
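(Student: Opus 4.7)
The plan is to reduce the estimate to the classical $L^q$ theory for the stationary Stokes system with solenoidal velocity and homogeneous Dirichlet data in a bounded smooth domain. There are three main steps: (i) kill the divergence constraint with a Bogovskii lift, (ii) absorb both $F$ and the Bogovskii corrector into a single $W^{-1,q}$ datum, and (iii) apply the standard solvability/regularity theorem for the reduced Stokes system.

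First I would exploit that $v=0$ on $\partial\Omega$, which by the divergence theorem forces the compatibility condition $\int_\Omega\chi\,dx=0$. The Bogovskii operator then produces $w\in W_0^{1,q}(\Omega)$ with $\div w=\chi$ and $\|\na w\|_{L^q}\le C(q,\Omega)\|\chi\|_{L^q}$. Setting $\tilde v:=v-w$, the pair $(\tilde v,\pi)$ solves
\[
-\Delta\tilde v+\na\pi=f+\div(F+\na w),\qquad \div\tilde v=0\ \text{ in }\Omega,\qquad \tilde v=0\ \text{ on }\partial\Omega.
\]
Since $\|\div(F+\na w)\|_{W^{-1,q}}\le \|F\|_{L^q}+\|\na w\|_{L^q}$, the new right-hand side lies in $W^{-1,q}(\Omega)$ with norm bounded by $\|f\|_{W^{-1,q}}+\|F\|_{L^q}+\|\chi\|_{L^q}$.

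Next I would invoke the classical result: for every $g\in W^{-1,q}(\Omega)$ there is a unique pair $(\tilde v,\pi-\bar\pi)\in W_0^{1,q}(\Omega)\times L^q(\Omega)/\mathbb{R}$ solving the divergence-free Stokes system with homogeneous Dirichlet data and $g$ as forcing, together with the estimate $\|\na\tilde v\|_{L^q}+\|\pi-\bar\pi\|_{L^q}\le C(q,\Omega)\|g\|_{W^{-1,q}}$. Restoring $v=\tilde v+w$ and combining with the Bogovskii bound then yields the claimed inequality.

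The main obstacle is step (iii), the $L^q$ Calderón-Zygmund estimate for the Stokes operator in a bounded smooth domain, which is not elementary. One proves interior $L^q$ bounds using Fourier multiplier theory applied to the Stokes fundamental solution, obtains boundary estimates via flattening and the corresponding half-space analysis (Poisson-type kernels, or the Agmon-Douglis-Nirenberg framework), glues them together with a partition of unity, and absorbs lower-order commutator terms using compactness of the embedding $W^{1,q}\hookrightarrow L^q$. This is precisely the content of \cite[Theorem 2.1]{GS1990}, which we cite rather than reprove.
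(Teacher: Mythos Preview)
The paper does not prove this lemma at all; it is stated as a direct citation of \cite[Theorem 2.1]{GS1990} with no accompanying argument. Your reduction---killing $\chi$ with a Bogovskii lift, absorbing $F+\na w$ into a single $W^{-1,q}$ datum, and then invoking the classical $L^q$ Stokes estimate---is correct and standard, and since you ultimately appeal to the same reference for the core Calder\'on--Zygmund step, your proposal is fully consistent with (and in fact more detailed than) the paper's treatment.
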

For $1<q<+\infty$, define
\be\ba\label{rq1}
r(q)=
\begin{cases}
\max\{1,\frac{3q}{q+3}\}, \,\, &q\neq \frac{3}{2},\\
>1\,\, &q= \frac{3}{2}.
\end{cases}
\ea\ee
The following lemma give a priori estimates for  the problem \eqref{stokes1} with Dirichlet boundary condition.
\begin{lemma} \la{eed1} Let $\Omega$ be the exterior of a simply connected smooth bounded domain in $\r^3$,  for the problem \eqref{stokes1} with \eqref{Dir1}, \eqref{inf1} and $\chi=0$,  we have the following conclusions:

(1) If $f\in W^{-1,q}$ and $F\in L^q$, $1<q<3$, there exists a unique solution $v\in W^{1,q}$ such that
\be \la{sse1}\|\na v\|_{L^q}+ \|\pi\|_{L^q}\le  C(q,\Omega)  \left(\|f\|_ {W^{-1,q}}+ \|F\|_{L^q}\right).\ee

(2) For any $q\in (\frac{3}{2},+\infty)$, if $F\in L^q$, and $f\in L^{r(q)}$ with $r(q)=\frac{3q}{q+3}$, then
\be \la{sse11}\|\na v\|_{L^q}+ \|\pi\|_{L^q}\le  C(q,\Omega)  \left(\|f\|_ {L^{r(q)}}+ \|F\|_{L^q}+\|\na v\|_{L^2}+ \|\pi\|_{L^2}\right).\ee
.

(3) If $f\in L^q$, $1<q<+\infty$, and $F=0$, then
\be \la{sse2}\|\na^2 v\|_{L^q}+ \|\na \pi\|_{L^q}\le  C(q,\Omega)  \|f\|_ {L^q}.\ee
\end{lemma}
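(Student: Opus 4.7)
The plan is to reduce the exterior-domain Stokes problem \eqref{stokes1} with \eqref{Dir1} and \eqref{inf1} to the two cases already in hand: the whole-space estimate (Lemma \ref{wse1}) and the bounded-domain estimate (Lemma \ref{eid1}), via the cut-off localization strategy flagged in the introduction of the paper. Fix $R_0$ large enough that $\bar D\subset B_{R_0}$ and pick $\vp\in C_c^\infty(\r^3)$ with $\vp\equiv 1$ on $B_{R_0}$ and $\vp\equiv 0$ outside $B_{2R_0}$. Decompose $v_b=\vp v-\mathcal{B}(\na\vp\cdot v)$, $\pi_b=\vp\pi$, $v_e=(1-\vp)v+\mathcal{B}(\na\vp\cdot v)$, $\pi_e=(1-\vp)\pi$, where $\mathcal{B}$ is the Bogovskii operator on the annular set $A=(B_{2R_0}\setminus B_{R_0})\cap\om$. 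Since $\div v=0$ in $\om$ and $v$ vanishes on $\pa\om$, the compatibility condition $\int_A\na\vp\cdot v\,dx=0$ holds and $\mathcal{B}(\na\vp\cdot v)\in W_0^{1,q}(A)$. By construction $\div v_b=\div v_e=0$, the function $v_b$ is supported in the bounded set $\om_b=\om\cap B_{2R_0}$ and vanishes on $\pa\om_b$, while $v_e$ vanishes near $\pa\om$ and so extends by zero to all of $\r^3$.

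Next, $(v_b,\pi_b)$ satisfies a Stokes system on $\om_b$ whose right-hand side is $\vp(f+\div F)$ plus commutator terms involving $\na\vp$, $\Delta\vp$, and derivatives of $\mathcal{B}(\na\vp\cdot v)$; applying Lemma \ref{eid1} bounds $\|\na v_b\|_{L^q}+\|\pi_b-\bar\pi_b\|_{L^q}$ in terms of $\|f\|_{W^{-1,q}}+\|F\|_{L^q}$ plus the lower-order residues $\|v\|_{L^q(A)}+\|\pi\|_{L^q(A)}$ generated by the commutators. Similarly $(v_e,\pi_e)$, extended by zero across $\pa\om$, solves a Stokes system on all of $\r^3$ with a parallel right-hand side, and Lemma \ref{wse1} delivers the matching bound. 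Summing the two pieces gives the a priori inequality $\|\na v\|_{L^q}+\|\pi\|_{L^q}\le C(\|f\|_{W^{-1,q}}+\|F\|_{L^q})+C(\|v\|_{L^q(A)}+\|\pi\|_{L^q(A)})$.

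To derive \eqref{sse1} in part (1) with $1<q<3$, I would absorb the residue via a compactness-contradiction argument built on the compact embedding $W^{1,q}(A)\hookrightarrow L^q(A)$ and uniqueness of the homogeneous exterior Stokes problem in the class $\{v\in W^{1,q}(\om):v\to 0\text{ at infinity}\}$ (as in \cite{GS1990},\cite{BM1}); existence then follows by approximating $\om$ by bounded subdomains $\om\cap B_R$, solving with Lemma \ref{eid1}, and passing to the limit using the resulting uniform bound. Part (3) comes from the same localization differentiated once more: the $W^{2,q}$ versions of Lemmas \ref{wse1} and \ref{eid1} (with $F=0$, $\chi=0$) yield \eqref{sse2} directly, and since $\na^2 v$ and $\na\pi$ are scale-invariant no residue remains to absorb. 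For part (2), the Sobolev duality embedding $L^{r(q)}\hookrightarrow W^{-1,q}$ (valid for $q>3/2$ since $1/r(q)=1/q+1/3$) reduces an $L^{r(q)}$ datum $f$ to the framework of part (1); but in the range $q\ge 3$ the contradiction argument no longer cleanly closes on $\r^3$, which is precisely why $\|\na v\|_{L^2}+\|\pi\|_{L^2}$ must be retained on the right side of \eqref{sse11}. The main obstacle is exactly this absorption step: the cut-off decomposition and commutator bookkeeping are routine, but eliminating the residue $\|v\|_{L^q(A)}+\|\pi\|_{L^q(A)}$ rests on exterior-domain uniqueness in the correct $W^{1,q}$ class, and is precisely what distinguishes the three parts of the statement.
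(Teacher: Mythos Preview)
Your localization strategy for part (2) is in the same spirit as the paper's, but the paper's execution is leaner in two respects. First, it does not introduce a Bogovskii correction: since both Lemma~\ref{wse1} and Lemma~\ref{eid1} already allow a nonzero divergence $\chi$, the paper simply works with $\eta v$ and $(1-\eta)v$, records $\div(\eta v)=\na\eta\cdot v$, and feeds this directly into the $\|\chi\|_{L^q}$ slot of \eqref{sswse1} and \eqref{ssi1}. Second, it avoids any compactness--contradiction step. The commutator $\tilde f=-\na\eta\cdot\na v-\Delta\eta\, v+\pi\,\na\eta-(\na\eta)\, F$ is supported on the bounded annulus $R<|x|<2R$; there $\|\tilde f\|_{W^{-1,q}}$ and $\|\na\eta\cdot v\|_{L^q}$ are bounded by interpolation as $C(\|\na v\|_{L^2}+\|\pi\|_{L^2}+\|F\|_{L^q})+\tfrac14(\|\na v\|_{L^q}+\|\pi\|_{L^q})$, and the last quarter is absorbed. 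The $L^2$ terms on the right of \eqref{sse11} are exactly what survives this absorption, for every $q>3/2$; they are not a symptom of a failed contradiction argument at $q\ge 3$.

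For parts (1) and (3) the paper does not run the localization at all: it simply cites \cite[Theorem 3.5]{BM1} for \eqref{sse1} and \cite[Theorem V.4.8]{N1959} for \eqref{sse2}. Your sketch of (3) contains a real gap: the claim that ``no residue remains to absorb'' is incorrect. Localizing the $W^{2,q}$ problem still produces lower-order terms of the form $\|\na v\|_{L^q(A)}+\|v\|_{L^q(A)}+\|\pi\|_{L^q(A)}$, and bounding these by $C\|f\|_{L^q}$ alone, uniformly for all $1<q<\infty$, is exactly the nontrivial content of the exterior Stokes theory --- it is not a consequence of any scale invariance, and it does not follow from parts (1) or (2) once $q\ge 3$. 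Without appealing to the exterior-domain results in \cite{N1959} (or reproducing them), your argument for (3) does not close.
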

\begin{proof} The first assertion (1) is due to \cite[Theorem 3.5]{BM1} and the last assertion (3) can be found in \cite[Theorem V.4.8]{N1959}.

It remains to prove (2). First, let $B_R\triangleq\{x\in \r^3||x|<R\}$ be a ball whose center is at the origin such that $\bar{D}\subset B_R$. Now we introduce a cut-off function $\eta(x)\in C_c^\infty(B_{2R})$ satisfying $\eta(x)=1$ for $|x|\leq R,$ $\eta(x)=0$ for $|x|\geq 2R,$ $0<\eta(x)<1$ for $R<|x|< 2R,$ and $|\partial^\alpha\eta(x)|<C(R,\alpha)$ for any $0\leq|\alpha|\leq k+1$. Notice that $B_{2R}\cap\Omega$ is a bounded domain and $\eta v=0$ on $\partial B_{2R}\cup \partial \Omega$, and $\eta v$ satisfies
 \be\ba\label{stokes2}
\begin{cases}
-\Delta (\eta v)  +\nabla (\eta\pi)=\eta f+\div (\eta F)+\tilde{f}, \,\, &x\in\Omega\cap B_{2R}, \\
\div (\eta v)=\nabla\eta\cdot v\,\, &x\in\Omega\cap B_{2R},
\end{cases}
\ea\ee
where $\tilde{f}=-\nabla\eta\cdot\nabla v-\Delta \eta v+\pi\nabla\eta-\nabla\eta F$.

Consequently, by Lemma \ref{eid1},
\be\la{vpiq1}\ba  &\|\na (\eta v)\|_{L^q}+ \|\eta\pi-\overline{\eta\pi}\|_{L^q} =\|\nabla (\eta v)\|_{L^q(B_{2R}\cap\Omega)}+\|\eta\pi\|_{L^q(B_{2R}\cap\Omega)} \\&\leq C(\|\eta f\|_ {W^{-1,q}(B_{2R}\cap\Omega)}+\|\eta F\|_{{L^q}(B_{2R}\cap\Omega)}+\|\tilde{f}\|_{W^{-1,q}(B_{2R}\cap\Omega)}+\|\nabla\eta\cdot v\|_{W^{-1,q}(B_{2R}\cap\Omega)})\\
&\leq C(\| f\|_ {L^{r(q)}}+\|F\|_{L^q}+\|\na v\|_{L^2}+ \|\pi\|_{L^2})+\frac{1}{4}(\|\na v\|_{L^q}+ \|\pi\|_{L^q}).\ea\ee
Similarly, we can check that $(1-\eta) v$ satisfies Stokes problem in $\r^3$ which one replaces $\eta$ by $1-\eta$ in \eqref{stokes2}. As a result, by Lemma \ref{wse1}, a same analysis gives
\be\la{vpiq2}\ba  &\|\na ((1-\eta) v)\|_{L^q}+ \|(1-\eta)\pi\|_{L^q} \\
&\leq C(\| f\|_ {L^{r(q)}}+\|F\|_{L^q}+\|\na v\|_{L^2}+ \|\pi\|_{L^2})+\frac{1}{4}(\|\na  v\|_{L^q}+ \|\pi\|_{L^q}).\ea\ee
Together with \eqref{vpiq1} and \eqref{vpiq2}, we get \eqref{sse11} and finish the proof.
\end{proof}
\begin{lemma} \cite[Theorem 3.5]{AACG2019}\la{ein1} Assume that $\Omega$ is a  smooth bounded  domain but not axially symmetric in $\r^3$, $1<q<+\infty$, for the problem \eqref{stokes1} with \eqref{Navi}, it holds that
\be \la{ssin1}\|\na u\|_{L^q}+ \|\pi-\bar{\pi}\|_{L^q}\le  C(q,\Omega)  \left(\|f\|_ {L^{r(q)}}+ \|F\|_{L^q}+\|\chi\|_{L^q}\right),\ee
where $r(q)>1$ is defined by \eqref{rq1}.
\end{lemma}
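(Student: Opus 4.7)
The plan is to prove \eqref{ssin1} by the classical two-step Solonnikov–Galdi strategy for Stokes systems, adapted to the Navier slip condition: first establish the $L^2$ estimate via coercivity and a Korn-type inequality, then bootstrap to general $q$ by localization and commutator estimates, and finally convert a $W^{-1,q}$ datum into an $L^{r(q)}$ datum by Sobolev duality.

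For $q=2$ I would work in the closed subspace $V=\{w\in H^1(\Omega)^3:\div w=0,\ w\cdot n=0 \text{ on }\partial\Omega\}$, equipped with the bilinear form $a(u,v)=\int 2\,D(u):D(v)\,dx$. The essential ingredient is a Korn-type inequality $\|\na w\|_{L^2}\le C(\Omega)\|D(w)\|_{L^2}$ on $V$, which is equivalent to the triviality of the kernel of $D(\cdot)$ restricted to $V$. That kernel consists of Killing fields of $\r^3$ whose restriction to $\partial\Omega$ is tangent, and when $\Omega$ is bounded and not axially symmetric no non-zero such field exists — this is precisely where the hypothesis is used. Lax–Milgram together with the De Rham lemma then produces a unique weak solution $(v,\pi)\in V\times L^2/\mathbb{R}$, and testing with $v$ itself yields \eqref{ssin1} for $q=2$.

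For general $q\in(1,\infty)$ I would introduce a smooth partition of unity $\{\eta_j\}$ subordinate to a finite cover of $\overline{\Omega}$ in which each boundary chart is flattened to $\r^3_+$. The localized pair $(\eta_j v,\eta_j(\pi-\bar\pi))$ solves a Stokes system on either $\r^3$ (interior charts) or $\r^3_+$ (boundary charts), with right-hand side modified by commutators of the form $[\Delta,\eta_j]v$, $[\na,\eta_j]\pi$, $\na\eta_j\cdot v$, and by pulled-back metric perturbations. Lemma \ref{wse1} handles the interior charts directly. For a boundary chart, after flattening the boundary, the Navier condition becomes $v_3=0$ and $\partial_3 v_\alpha+\partial_\alpha v_3=0$ for $\alpha=1,2$, which is invariant under odd reflection of $v_3$ and even reflection of $v_1,v_2,\pi$ across $\{x_3=0\}$; hence the half-space problem reduces to a whole-space Stokes problem to which Lemma \ref{wse1} again applies. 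Summing the local estimates produces
\[\|\na v\|_{L^q}+\|\pi-\bar\pi\|_{L^q}\le C\bigl(\|f\|_{W^{-1,q}}+\|F\|_{L^q}+\|\chi\|_{L^q}\bigr)+C\bigl(\|v\|_{L^q}+\|\pi-\bar\pi\|_{L^{\tilde q}}\bigr),\]
for some $\tilde q<q$ arising from Sobolev interpolation of the commutator terms.

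The compact lower-order perturbations $\|v\|_{L^q}$ and $\|\pi-\bar\pi\|_{L^{\tilde q}}$ are then absorbed by a standard Peetre-type contradiction-compactness argument, which again uses triviality of the kernel (non-axial-symmetry) to upgrade the a priori bound. Finally, the replacement of $\|f\|_{W^{-1,q}}$ by $\|f\|_{L^{r(q)}}$ in \eqref{ssin1} is the Sobolev embedding $L^{r(q)}\hookrightarrow W^{-1,q}$, which, given the definition \eqref{rq1} of $r(q)$, is exactly the dual of $W^{1,q'}\hookrightarrow L^{r(q)'}$. The main obstacle I expect is the boundary localization: checking that the flat Navier condition is genuinely preserved by the reflection (this is where the slip structure, as opposed to no-slip, is indispensable) and that the commutators coming from both the cutoffs and the flattening diffeomorphism are bounded by admissible lower-order norms. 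The non-axial-symmetry hypothesis is the other crucial ingredient, entering both at $q=2$ (for Korn) and at general $q$ (for the final compactness absorption).
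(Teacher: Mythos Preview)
The paper does not supply a proof of this lemma at all: it is stated with the citation \cite[Theorem 3.5]{AACG2019} in the header and immediately followed by the next lemma, so the result is simply imported from the Acevedo--Amrouche--Conca--Ghosh reference. There is therefore no ``paper's own proof'' to compare against.

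That said, your sketch is a reasonable outline of how such an $L^q$ Stokes estimate with Navier slip is typically established, and the key structural points are identified correctly: the non-axial-symmetry hypothesis is exactly what kills the tangent Killing fields and yields the Korn-type coercivity on $\{w\in H^1:\div w=0,\ w\cdot n=0\}$; the even/odd reflection across a flattened boundary is the standard device that converts the slip half-space problem into a whole-space Stokes system amenable to Lemma~\ref{wse1}; and the Peetre--Lions compactness argument is the usual way to absorb the lower-order commutator remainders. The final passage from $\|f\|_{W^{-1,q}}$ to $\|f\|_{L^{r(q)}}$ via the dual Sobolev embedding is also correct for $q>3/2$. One caution: the reflection trick preserves the \emph{flat} Navier condition exactly, but after a boundary-flattening diffeomorphism the transformed system carries variable-coefficient perturbations, so one must either freeze coefficients and treat the remainder as a small perturbation (requiring sufficiently fine charts) or invoke a more delicate Agmon--Douglis--Nirenberg verification that the Navier system satisfies the complementing condition; your sketch glosses over this, and it is the genuinely technical step in any full proof.
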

\begin{lemma} \la{een1} Let $\Omega$ be the exterior of a simply connected smooth bounded domain in $\r^3$, $1<q<+\infty$ and  $r(q)$ is defined by \eqref{rq1}. For the problem \eqref{stokes1} with \eqref{Navi}, \eqref{inf1} and $\chi=0$,  we have the following conclusions:

(1) If $f\in L^\frac{6}{5}$ and $F\in L^2$, there exists a unique solution $v\in W^{1,2}$ such that
\be \la{ssen1}\|\na v\|_{L^2}+ \|\pi\|_{L^2}\le  C(\Omega)  \left(\|f\|_ {L^\frac{6}{5}}+ \|F\|_{L^2}\right).\ee

(2) For any $\frac{3}{2}<q<+\infty$, if $F\in L^q$, and $f\in L^{r(q)}$ with $r(q)=\frac{3q}{q+3}$, then
\be \la{ssen11}\|\na v\|_{L^q}+ \|\pi\|_{L^q}\le  C(q,\Omega)  \left(\|f\|_ {L^{r(q)}}+ \|F\|_{L^q}+\|\na v\|_{L^2}+ \|\pi\|_{L^2}\right).\ee
.

(3) If $f\in L^p\cap L^\frac{6}{5}$, $p\in [2,6]$, and $F=0$, then
\be \la{ssen2}\|\na^2 v\|_{L^p}+ \|\na \pi\|_{L^p}\le  C(p,\Omega)  (\|f\|_ {L^p}+\|f\|_ {L^\frac{6}{5}}).\ee
\end{lemma}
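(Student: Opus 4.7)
The plan is to mirror the proof of Lemma \ref{eed1}: part (1) via a direct energy estimate, parts (2) and (3) via a cut-off decomposition that combines the bounded-domain Navier estimate (Lemma \ref{ein1}) with the whole-space estimate (Lemma \ref{wse1}).

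For part (1), I would test the first equation of \eqref{stokes1} against $v$. Using $\chi=0$ and rewriting $-\Delta v = -2\div D(v) + \na\div v = -2\div D(v)$, integration by parts together with $v\cdot n = 0$ and $(D(v)\,n)_{tan}=0$ on $\pa\Omega$ makes the boundary integral $\int_{\pa\Omega}(D(v)\,n)\cdot v\,dS$ vanish, while $\int\na\pi\cdot v\,dx = 0$ by $\div v=0$ and $v\cdot n=0$. This yields $2\|D(v)\|_{L^2}^2 = \int f\cdot v\,dx - \int F:\na v\,dx$. A Korn-type inequality for exterior domains with $v\cdot n = 0$ on $\pa\Omega$ and $v\to 0$ at infinity gives $\|\na v\|_{L^2}\le C\|D(v)\|_{L^2}$, and combining with H\"older and the Sobolev embedding $\|v\|_{L^6}\le C\|\na v\|_{L^2}$ produces the $\|\na v\|_{L^2}$ bound of \eqref{ssen1}. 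The $\|\pi\|_{L^2}$ bound then follows from a duality/Bogovskii argument, and existence and uniqueness come from a standard Galerkin scheme.

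For part (2), I would introduce the same cut-off $\eta$ as in the proof of Lemma \ref{eed1}, with $\eta\equiv 1$ on a ball $B_R\supset\bar{D}$ and $\eta\equiv 0$ outside $B_{2R}$. Then $\eta v$ solves a Stokes system on the bounded domain $B_{2R}\cap\Omega$ with right-hand side modified by the commutator $\tilde f = -\na\eta\cdot\na v - \Delta\eta\, v + \pi\na\eta - \na\eta\,F$ and with $\div(\eta v) = \na\eta\cdot v$; on its boundary $\eta v$ satisfies Navier conditions on $\pa\Omega$ (where $\eta\equiv 1$) and homogeneous Dirichlet on $\pa B_{2R}$. Symmetrically, $(1-\eta)v$ solves a Stokes system on all of $\rrr$. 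Applying the bounded-domain estimate of Lemma \ref{ein1} (or its mixed-BC analogue) to $\eta v$ and Lemma \ref{wse1} to $(1-\eta)v$, bounding the commutator terms in $W^{-1,q}$ by $\|f\|_{L^{r(q)}}+\|F\|_{L^q}+\|\na v\|_{L^2}+\|\pi\|_{L^2}$, and absorbing the $\tfrac14(\|\na v\|_{L^q}+\|\pi\|_{L^q})$ contributions into the left-hand side (exactly as in \eqref{vpiq1}--\eqref{vpiq2}) yields \eqref{ssen11}.

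Part (3) follows by the same cut-off reduction, but now using the $W^{2,p}$ versions of the bounded-domain and whole-space Stokes estimates; the lower-order contributions from $[\eta,\Delta]$ are controlled via part (1) and Sobolev embedding, with $f\in L^{6/5}$ feeding the $\na v$ and $\pi$ bounds of part (1). The main technical obstacle is that the problem satisfied by $\eta v$ on $B_{2R}\cap\Omega$ is of \emph{mixed} type---Navier on $\pa\Omega$ and homogeneous Dirichlet on $\pa B_{2R}$---whereas Lemma \ref{ein1} covers only pure Navier BC on a bounded, not axially symmetric domain. I expect to overcome this either by citing a standard $L^q$ theory for mixed-BC Stokes problems, or by a secondary localisation that further splits $\eta v$ into a piece supported near $\pa\Omega$ (handled by pure Navier, Lemma \ref{ein1}) and a piece supported away from $\pa\Omega$ (handled by pure Dirichlet, Lemma \ref{eid1}).
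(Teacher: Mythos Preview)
Your treatment of parts (1) and (2) matches the paper's: an energy test with $\phi=v$ plus Ne\v{c}as' theorem for (1), and the cut-off decomposition combining Lemma~\ref{ein1} (interior, Navier) with Lemma~\ref{wse1} (whole space) for (2). The mixed-boundary issue you flag for $\eta v$ is real; the paper glosses over it just as much.

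Where your route genuinely diverges is part (3). You propose to push the cut-off reduction through to $W^{2,p}$, which forces you to confront $W^{2,p}$ Stokes regularity under mixed Navier/Dirichlet conditions on $B_{2R}\cap\Omega$---the obstacle you correctly identify. The paper sidesteps this entirely by a div--curl argument: it rewrites the Navier condition \eqref{Navi} as $\curl v\times n=-Av$ on $\partial\Omega$ with $A=-2D(n)$, obtains $\|\nabla\pi\|_{L^q}\le C(\|f\|_{L^q}+\|\nabla v\|_{L^q})$ from a pressure lemma in \cite{ANIS}, and then rewrites the equation as $\nabla\times\curl v=f-\nabla\pi$. Since $(\curl v+(Av)^\perp)\times n=0$ on $\partial\Omega$, Lemma~\ref{crle2} controls $\|\nabla\curl v\|_{L^q}$, and a bootstrap through Lemmas~\ref{crle1} and~\ref{crle3} yields $\|\nabla^2 v\|_{L^p}$. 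No second-order Stokes estimate on any auxiliary domain is ever invoked, so the mixed-BC difficulty simply does not arise; the restriction $p\in[2,6]$ enters through Lemma~\ref{crle3}. Your cut-off strategy for (3) may be salvageable with a mixed-BC regularity citation or a secondary localisation, but the paper's div--curl route is cleaner here and uses exactly the toolkit already assembled in Lemmas~\ref{crle1}--\ref{crle3}.
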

\begin{proof}
For any $\phi\in D^1$ with $\phi\cdot n=0$ on $\partial\Omega$, multiplying $\eqref{stokes1}_1$ by $\phi$, we get
\be \la{ene1}2\int D(v)\cdot D(\phi)dx+\int\na \pi\cdot\phi dx=\int f\cdot \phi dx-\int F\cdot \na\phi dx.\ee
Set $\phi=v$ in \eqref{ene1}, notice that $\|\nabla v\|_{L^2}=2\|D (v)\|_{L^2}$, it is easy to check that
\be \la{ssen111}\|\na v\|_{L^2}\le  C(\Omega)  \left(\|f\|_ {L^\frac{6}{5}}+ \|F\|_{L^2}\right).\ee
On the other hand, by \eqref{ene1} and Ne\v{c}as's imbedding theorem, we obtain for any $q\in(1,\infty)$,
\be \la{ssen112}\|\pi\|_{L^q}\le  C(q,\Omega)  \left(\|f\|_ {L^\frac{6}{5}}+ \|F\|_{L^2}+\|\na v\|_{L^q}\right),\ee
which, together with \eqref{ssen111}, implies that
\be \la{ssen113}\|\pi\|_{L^2}\le  C  \left(\|f\|_ {L^\frac{6}{5}}+ \|F\|_{L^2}\right).\ee
As a result, \eqref{ssen1} is established.

Similar to the proof of \eqref{sse11} in Lemma \ref{eed1}, and along with Lemma \ref{ein1} instead of \ref{eid1}, one can get \eqref{ssen11}.

Now we will claim \eqref{ssen2}. First, set $A\triangleq-2D(n)$, \eqref{Navi} is equivalent to
\be \la{ch101} u \cdot n = 0, \,\,\curl u\times n=-A\, u \,\,\,\text{on}\,\,\, \partial\Omega.\ee
Define $(Av)^\perp\triangleq -Av\times n$, it is obvious that $(\curl u+(Au)^\perp)\times n=0$ on $\partial\Omega$. One can find that
$$\int\nabla \pi\cdot\nabla\eta dx=\int\left(f-\nabla\times(Av)^\perp\right)\cdot\nabla\eta dx,\,\,\forall\eta\in C_0^{\infty}(\r^3).$$
Thanks to \cite[Lemma 5.6]{ANIS}, for any $q\in (1,\infty)$,
\be\la{ssen114}\ba
\|\nabla \pi\|_{L^q}&\leq C(\|f\|_{L^q}+\|\nabla\times(Av)^\perp\|_{L^q})\\
&\leq C(\|f\|_{L^q}+\|\nabla v\|_{L^q}).
\ea\ee
One rewrites $\eqref{stokes1}_1$ with $F=0$ as $\nabla\times\curl v=f-\nabla \pi.$
Since $(\curl v+(Av)^\perp)\times n=0$ on $\partial\Omega$ and $\div(\nabla\times\curl v)=0$, by Lemma \ref{crle2}, we give
\be\la{x267}\ba
\|\nabla\curl v\|_{L^q}&\leq C(\|\nabla\times\curl v\|_{L^q}+\|\nabla v\|_{L^q})\\
&\leq C(\|f\|_{L^q}+\|\nabla v\|_{L^q}),
\ea\ee
On the other hand,  by Lemma \ref{crle1}, \eqref{ssen111} and \eqref{x267}, for any $p\in [2,6]$,
\bnn\ba
 \|\nabla v\|_{L^p}  &\le C (\|\curl v\|_{L^p}+\|\nabla v\|_{L^2}) \\
&\le C (\|\na\curl v\|_{L^2}+\|\nabla v\|_{L^2})\\
&\le  C  (\|f\|_ {L^2}+\|f\|_ {L^\frac{6}{5}}).
\ea\enn
Therefore, by \eqref{x267} again,
\be\la{x268}\ba
\|\nabla\curl v\|_{L^p}&\leq C(\|\nabla\times\curl v\|_{L^p}+\|\nabla v\|_{L^p})\\
&\leq C(\|f\|_{L^p}+\|f\|_ {L^\frac{6}{5}}),
\ea\ee
which, by Lemma \ref{crle3}, indicates that
\be\la{x269}\ba
\|\nabla v\|_{W^{1,p}}&\leq C(\|\curl v\|_{W^{1,p}}+\|\nabla v\|_{L^2})\\
&\leq C(\|f\|_{L^p}+\|f\|_ {L^\frac{6}{5}}).
\ea\ee
Together \eqref{ssen114} and \eqref{x269}, we arrive at \eqref{ssen2} and finish the proof.
\end{proof}
The following regularity results on the Stokes equations will be useful for our derivation of higher order a priori estimates.
\begin{lemma} \la{stokes} For   positive constants $\xmu,\smu,$ and $ q\in (3 ,\infty)$, in addition to \eqref{n3},  assume that    $\mn$ satisfies \be\la{ij80}\na\mn\in L^q,\quad
  0<\xmu\le \mn\le\smu<\infty. \ee
   For the problem with the boundary condition \eqref{Dir1} or \eqref{Navi}
\be\label{3rd1}
\begin{cases}
 -\div(2\mn D(u))  +\nabla P=F,\,\,\,\,&x\in \Omega,\\
 \div u=0,   \,\,\,&x\in  \Omega,\\
u(x)\rightarrow0,\,\,\,\,&|x|\rightarrow\infty,
\end{cases}
\ee
we have the following conclusions:

(a) If $F=f\in L^{6/5}\cap L^r$ with   $r\in[ 2q/(q+2),q],$ then there exists some positive constant $C$ depending only on $ \xmu , \smu , r, $ and $q$  such that the unique weak solution $(u,P)\in D^1_{0,\div}\times L^2$  satisfies
 \be\ba\label{3rd2}
\|\na u\|_{L^2 }+\|P\|_{L^2 }\le C \|f\|_{L^{6/5} },
\ea\ee
 \be\ba\label{3rd3'}
\|\na^2 u\|_{L^r}+\xl\|\na P\xr\|_{L^r}\le  C  \|f\|_{L^r}+ C \left(\|\na\mn\|_{L^q}^{\frac{q(5r-6)}{2r(q-3)}}+1\right)\|f\|_{L^{6/5}} .
\ea\ee

(b) If $F=\div g$ with $g\in L^2\cap L^{\ti r}$ for some $\ti r\in (6q/(q+6),q],$ then there exists a positive constant $C$
 depending only on $\xmu, \smu, q,$ and $\ti r$ such that the unique weak solution $(u,P)\in D^1_{0,\div}\times L^2$ to
 \eqref{3rd1}  satisfies\be \la{3e1}\|\na u\|_{L^2\cap L^{\ti r}}+ \|P\|_{L^2\cap L^{\ti r}}\le  C  \|g\|_{L^2\cap L^{\ti r} }+C \|\na\mn\|_{L^q}^{\frac{3q(\ti r-2)}{2\ti r(q-3)}}\|g\|_{L^2}.\ee\end{lemma}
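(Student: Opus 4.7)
My plan is to reduce the variable-coefficient Stokes system \eqref{3rd1} to a constant-coefficient one for which Lemmas \ref{wse1}--\ref{een1} directly apply. Since $\div u=0$, the identity $\div(2\mn D(u))=\mn\Delta u+2D(u)\na\mn$ lets me rewrite \eqref{3rd1} as
\begin{equation*}
-\Delta u+\na(P/\mn)=F/\mn+\frac{2D(u)\na\mn}{\mn}-\frac{P\na\mn}{\mn^2}\text{ in }\Omega,\qquad \div u=0,
\end{equation*}
subject to \eqref{Dir1} or \eqref{Navi}. The $\na\mn$-terms are perturbations controlled by $\|\na\mn\|_{L^q}$ with $q>3$, so H\"older pairings with $\na u$ and $P$ leave room for favorable Sobolev interpolation.

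For part (a), I would first derive \eqref{3rd2} by testing \eqref{3rd1} with $u$: the dissipation is bounded below by $\xmu\|\na u\|_{L^2}^2$ through the Korn-type inequalities of Lemmas \ref{crle1}--\ref{crle3}, and the right-hand side is controlled by $\|f\|_{L^{6/5}}\|u\|_{L^6}\le C\|f\|_{L^{6/5}}\|\na u\|_{L^2}$ via \eqref{g1}; the $L^2$-bound on $P$ then comes from the Ne\v{c}as embedding applied to $\na P=f+\div(2\mn D(u))$, or directly from \eqref{ssen111}--\eqref{ssen113} in the slip case. For \eqref{3rd3'}, I would apply Lemma \ref{eed1}(3) (Dirichlet) or Lemma \ref{een1}(3) (slip) to the rewritten system: the term $f/\mn$ yields $C\|f\|_{L^r}+C\|f\|_{L^{6/5}}$, and the perturbation is estimated by H\"older with $1/r=1/q+1/s$,
\begin{equation*}
\Bigl\|\frac{D(u)\na\mn}{\mn}\Bigr\|_{L^r}+\Bigl\|\frac{P\na\mn}{\mn^2}\Bigr\|_{L^r}\le C\|\na\mn\|_{L^q}\bigl(\|\na u\|_{L^s}+\|P\|_{L^s}\bigr).
\end{equation*}
Interpolating $\|\na u\|_{L^s}$ (and analogously $\|P\|_{L^s}$) between $L^2$ and $L^{3r/(3-r)}$, the latter bounded by $C\|\na^2 u\|_{L^r}$ via homogeneous Sobolev embedding, produces the interpolation exponent $\theta=2r(q-3)/(q(5r-6))$. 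Young's inequality absorbs the top-order norm and gives the factor $\|\na\mn\|_{L^q}^{1/\theta}=\|\na\mn\|_{L^q}^{q(5r-6)/(2r(q-3))}$ multiplying $\|f\|_{L^{6/5}}$, as stated in \eqref{3rd3'}.

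Part (b) follows the same scheme, with the divergence forcing rewritten as $\div(g/\mn)+g\cdot\na\mn/\mn^2$. The basic bound comes from testing with $u$: $\|\na u\|_{L^2}+\|P\|_{L^2}\le C\|g\|_{L^2}$. For the $L^{\ti r}$ estimate, I would apply the first-order Stokes bound (Lemma \ref{eed1}(1), or its slip-case analogue obtained through the cut-off construction that proves \eqref{sse11}) to get
\begin{equation*}
\|\na u\|_{L^{\ti r}}+\|P\|_{L^{\ti r}}\le C\|g\|_{L^{\ti r}}+C\|\na\mn\|_{L^q}\bigl(\|\na u\|_{L^{s'}}+\|P\|_{L^{s'}}+\|g\|_{L^{s'}}\bigr)+C(\|\na u\|_{L^2}+\|P\|_{L^2}),
\end{equation*}
with $1/s'=1/\ti r-1/q$. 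Interpolating between $L^2$ and $L^{\ti r}$ produces $\theta'=2\ti r(q-3)/(3q(\ti r-2))$, and Young's inequality yields the factor $\|\na\mn\|_{L^q}^{1/\theta'}=\|\na\mn\|_{L^q}^{3q(\ti r-2)/(2\ti r(q-3))}$ multiplying $\|g\|_{L^2}$, giving \eqref{3e1}. The main obstacle I expect is handling the slip case consistently with the Dirichlet case: \eqref{ssen11} and \eqref{ssen2} carry extra $\|\na v\|_{L^2}+\|\pi\|_{L^2}$ corrections on the right that must be absorbed via \eqref{3rd2} at each stage, and the cut-off construction from the proof of Lemma \ref{eed1} must be extended (or paralleled) to the slip setting so that no spurious dependence on the obstacle $D$ enters the interpolation exponents.
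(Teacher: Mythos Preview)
Your proposal is correct and follows the paper's proof almost verbatim: the same reduction to a constant-coefficient Stokes system via $-\Delta u+\na(P/\mn)=F/\mn+2D(u)\na\mn/\mn-P\na\mn/\mn^2$, the same energy test for \eqref{3rd2}, the same application of Lemma~\ref{eed1}(3)/Lemma~\ref{een1}(3) plus H\"older--interpolation--Young for \eqref{3rd3'}, and the same first-order Stokes bound \eqref{sse11}/\eqref{ssen11} for \eqref{3e1}.

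One slip to fix in part~(b): when you invoke \eqref{sse11}/\eqref{ssen11} at exponent $\ti r$, the non-divergence perturbation $\ti G=g\cdot\na\mn/\mn^2+2D(u)\na\mn/\mn-P\na\mn/\mn^2$ must be placed in $L^{3\ti r/(3+\ti r)}$, not in $L^{\ti r}$. The H\"older pairing therefore gives $1/s'=(3+\ti r)/(3\ti r)-1/q=1/3+1/\ti r-1/q$, not $1/s'=1/\ti r-1/q$ as you wrote. With your formula one gets $s'>\ti r$, making the interpolation between $L^2$ and $L^{\ti r}$ impossible; with the correct $s'$ one has $2\le s'\le \ti r$ (this is exactly the range condition $\ti r>6q/(q+6)$) and the interpolation yields your stated $\theta'=2\ti r(q-3)/(3q(\ti r-2))$, so your final exponent is right. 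Also, since $\ti r$ can exceed $3$, Lemma~\ref{eed1}(1) is not the right citation; use \eqref{sse11}/\eqref{ssen11} directly, as you already noted parenthetically.
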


\begin{proof} First, for any $\phi\in D^1$ with $\phi\cdot n=0$ on $\partial\Omega$, multiplying $\eqref{3rd1}_1$ by $\phi$, we get
\be \la{3rd31}2\int \mn D(u)\cdot D(\phi)dx+\int\na P\cdot\phi dx=\int F\cdot \phi dx.\ee
Taking $\phi=u$ and  $F=f+\div g$ in \eqref{3rd31}, and integrating by parts, we obtain after using \eqref{3rd1}$_2$ that
  \be\ba\label{3rd3}\notag
 2\int \mn|D(u)|^2dx=\int f\cdot udx+\int g\cdot \na udx\le C(\|f\|_{L^{6/5}}+\|g\|_{L^2} )\|\na  u\|_{L^2},
\ea\ee
which together with \eqref{ij80} yields
  \be\ba\label{3rd4}
\|\na  u\|_{L^2} \le C\xmu^{-1} (\|f\|_{L^{6/5}}+\|g\|_{L^2} ),
\ea\ee due to
  \be\ba\label{3rd12}
 2\int|D(u)|^2dx= \int|\na u|^2dx.
\ea\ee

Furthermore,  by \eqref{3rd31} and Ne\v{c}as's imbedding theorem, we check that for any $q\in(1,\infty)$,
\be \la{3rd32}\|P\|_{L^q}\le  C \left(\|f\|_ {W^{-1,q}}+ \|g\|_{L^p}+\|\na v\|_{L^q}\right),\ee
which together with the Sobolev inequality and \eqref{3rd12}   gives
 \be\ba\label{3rd6}\notag
 \|P\|_{L^2}  \le C(\|f\|_{L^{6/5}}+\|g\|_{L^2}).
\ea\ee
  Combining this with \eqref{3rd4} leads to \eqref{3rd2} and the part result of \eqref{3e1} on $L^2$ estimate.

Next, we will claim \eqref{3rd3'}. One can rewrite  \eqref{3rd1}$_1$ with $F=f$ as
 \be\ba\label{3rd7}
  -\Delta u+\na \xl(\frac{P}{\mn}\xr)=\frac{f}{\mn}+\frac{2D(u)\cdot\na\mn }{\mn}-\frac{P\na\mn}{\mn^2}.
\ea\ee
Applying  Lemma \ref{eed1} (or Lemma \ref{een1} ) to the Stokes system \eqref{3rd7}  yields that for  $r\in[ 2q/(q+2),q],$ $\theta=\frac{2r(q-3)}{q(5r-6)}$,
\bnn\ba\label{3rd8}
  \|\na^2 u\|_{L^r}+\xl\|\na P\xr\|_{L^r}
 &\le \|\na^2 u\|_{L^r}+C\xl\|\na  \xl(\frac{P}{\mn}\xr)\xr\|_{L^r}+C\xl\| \frac{P\na\mn}{\mn^2}\xr\|_{L^r}\\
 &\le C (\xl\|  {f}  \xr\|_{L^r}+\|f\|_{L^{6/5}})+C\xl\|  {2d\cdot\na\mn} \xr\|_{L^r}+C\xl\|  {P\na\mn} \xr\|_{L^r}  \\
   &\le C (\xl\|  {f}  \xr\|_{L^r}+\|f\|_{L^{6/5}})+C \|\na\mn\|_{L^q} \|\na u\|_{L^2}^{\theta}\|\na^2 u\|_{L^r}^{1-\theta}\\
   &\quad+C \|\na\mn\|_{L^q}\xl\| P\xr\|_{L^2}^{\frac{2r(q-3)}{q(5r-6)}}\xl\|\na P \xr\|_{L^r}^{1-\frac{2r(q-3)}{q(5r-6)}}\\
    &\le C (\xl\|  {f}  \xr\|_{L^r}+\|f\|_{L^{6/5}})+C \|\na\mn\|_{L^q}^{\frac{q(5r-6)}{2r(q-3)}} (\|\na u\|_{L^2}+  \|  {P} \|_{L^2} ) \\&\quad+\frac{1}{2}\xl(\|\na^2 u\|_{L^r}+ \xl\|\na P\xr\|_{L^r}\xr),
\ea\enn
  which combined with  \eqref{3rd2} yields    \eqref{3rd3'}.

Finally, we will prove \eqref{3e1}. Rewrite  \eqref{3rd1}$_1$  with $F=\div g$ as
 \be\ba\label{5rd7}
  -\Delta u+\na \xl(\frac{P}{\mn}\xr)=
  \div\left(\frac{g}{\mn}\right) +\ti G,
\ea\ee where $$\ti G\triangleq\frac{g\cdot\na\mn }{\mn^2}+\frac{2d\cdot\na\mn }{\mn}-\frac{P\na\mn}{\mn^2}. $$
 By Hodler's and Young's inequalities, for any $\delta>0,$ \be\la{5r3}\ba \|\ti G\|_{L^{\frac{3\ti r}{ 3+\ti r}}}&\le \delta (\|g\|_{L^{\ti r}}+\|\na u\|_{L^{\ti r}}+\|P\|_{L^{\ti r}})\\&\quad+C(\delta)\|\na\mn\|_{L^q}^{\frac{3q(\ti r-2)}{2\ti r(q-3)}}( \|g\|_{L^2}+\|\na u\|_{L^2}+\|P\|_{L^2}).\ea\ee
By  Lemma \ref{eed1} (or Lemma \ref{een1}), choosing $\delta$ suitably small,  we get
\bnn\ba \|\na u\|_{L^{\ti r}}&\le C\left(\|\frac{g}{\mn}\|_{L^{\ti r}}+\|\ti G\|_{L^{\frac{3\ti r}{ 3+\ti r}}}+\|\na u\|_{L^2}+\|P\|_{L^2}\right)\\ &\le C\|g\|_{L^{\ti r}}+C\|\ti G\|_{L^{\frac{3\ti r}{3+\ti r}}},\ea\enn which together with \eqref{3rd32} yields  \bnn\ba \|\na u\|_{L^{\ti r}}+\|P\|_{L^{\ti r}} \le C(\|g\|_{L^{\ti r}}+\|g\|_{L^2})+C\|\ti G\|_{L^{\frac{3\ti r}{3+\ti r}}}.\ea\enn  Combining this and \eqref{5r3} gives \eqref{3e1}. The proof of Lemma \ref{stokes} is finished.
\end{proof}

\section{A Priori Estimates}\label{sec3}
In this section, $\Omega$ is always the exterior of a simply connected bounded smooth domain $D$ in $\r^3$, we will establish some necessary a priori bounds of local strong solutions $(\rho,u,P)$ to the system \eqref{1.1}--\eqref{inf1}  whose existence is guaranteed by Lemma \ref{local}. Thus, let $T>0$ be a fixed time and $(\rho, u,P)$  be the smooth solution to \eqref{1.1}-\eqref{inf1} on $\Omega\times(0,T]$ with smooth initial data $(\rho_0,u_0)$ satisfying \eqref{2.2}.

We have the following key a priori estimates on $(\n,u,P)$.
\begin{proposition}\la{pr1}  There exists some  positive constant  $\ve_0$
    depending    only on  $q,  \bar\n, \xmu, \smu, $ $\|\n_0\|_{L^{3/2}}, $ and $M$  such that if
       $(\n,u,P)$  is a smooth solution of
       \eqref{1.1}--\eqref{inf1}  on $\Omega \times (0,T] $
        satisfying
 \be\la{3a1}
     \sup_{t\in[0,T]}\|\na\mn\|_{L^q}\le 4M ,\quad  \int_0^T\|\na u\|_{L^2}^4dt \leq 2\|\na u_0\|_{L^2}^2 ,
   \ee
 the following estimates hold
        \be\la{3a2}
\sup_{t\in[0,T]}\|\na\mn\|_{L^q} \le 2M ,\quad \int_0^T\|\na u\|_{L^2}^4dt \leq  \|\na u_0\|_{L^2}^2 ,
  \ee
   provided $\|\na u_0\|_{L^2}\le \ve_0.$
\end{proposition}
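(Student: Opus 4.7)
The plan is to set up a bootstrap argument: assuming \eqref{3a1} on $[0,T]$, I would derive quantitative bounds on $(\rho,u,P)$ whose constants depend only on the parameters listed in the proposition, and then exhibit the smallness threshold $\ve_0$ that forces both inequalities in \eqref{3a1} to improve by a factor of two. The structure mirrors the Cauchy-problem argument in \cite{HLL2021}, but all the elliptic estimates need to be carried out through Lemma~\ref{stokes} so that the boundary plays no explicit role.

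The first step is the basic energy identity obtained by testing $\eqref{1.1}_2$ with $u$ and using $\div u=0$ together with the transport equation for $\rho$:
\[
\frac{d}{dt}\|\sqrt{\rho}\,u\|_{L^2}^2 + 4\int\mu(\rho)|D(u)|^2\,dx = 0.
\]
Since $\rho$ is carried by a divergence-free field, $\|\rho(\cdot,t)\|_{L^{3/2}}=\|\rho_0\|_{L^{3/2}}$, and Hölder plus Sobolev give $\|\sqrt{\rho}\,u\|_{L^2}^2\le C\|\rho_0\|_{L^{3/2}}\|\na u\|_{L^2}^2$. Combined with $2\int|D(u)|^2\,dx=\int|\na u|^2\,dx$ (which on $\Omega$ holds for $u\in D^1_{0,\div}$) this yields exponential decay of $\|\sqrt{\rho}\,u\|_{L^2}^2$ at rate $\lambda\sim \xmu/\|\rho_0\|_{L^{3/2}}$ and the bound $\int_0^T\|\na u\|_{L^2}^2\,dt\le C\|\na u_0\|_{L^2}^2$.

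Second, I would test $\eqref{1.1}_2$ against $u_t$, using $\partial_t\mu(\rho)=-u\cdot\na\mu(\rho)$, to obtain
\[
\frac{d}{dt}\int\mu(\rho)|D(u)|^2\,dx+\int\rho|u_t|^2\,dx = -\int\rho\,(u\cdot\na u)\cdot u_t\,dx-\int (u\cdot\na\mu(\rho))|D(u)|^2\,dx.
\]
The convective term is controlled by Gagliardo--Nirenberg \eqref{g1} and absorbed via $\|\sqrt{\rho}\,u_t\|_{L^2}^2$ on the left at the cost of a prefactor $C\|\na u\|_{L^2}^4$. The $\na\mu(\rho)$-term is handled using the a priori assumption $\|\na\mu(\rho)\|_{L^q}\le 4M$ together with an interpolation of $\|\na u\|_{L^{2q/(q-2)}}$ between $\|\na u\|_{L^2}$ and $\|\na^2 u\|_{L^{p_0}}$; the latter comes from applying Lemma~\ref{stokes}(a) with $f=-\rho u_t-\rho u\cdot\na u$, which gives $\|\na^2 u\|_{L^{p_0}}+\|\na P\|_{L^{p_0}}\le C(M)\,(\|\sqrt\rho\,u_t\|_{L^2}+\|\na u\|_{L^2}^2)$ after more interpolation. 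Grönwall applied to the resulting ODI with the $L^4_t$ control in $\eqref{3a1}_2$ yields $\sup_{[0,T]}\|\na u\|_{L^2}^2\le C\|\na u_0\|_{L^2}^2$ and $\int_0^T\|\sqrt\rho\,u_t\|_{L^2}^2\,dt\le C\|\na u_0\|_{L^2}^2$. In particular
\[
\int_0^T\|\na u\|_{L^2}^4\,dt\le \sup_{[0,T]}\|\na u\|_{L^2}^2\cdot\int_0^T\|\na u\|_{L^2}^2\,dt\le C\|\na u_0\|_{L^2}^4,
\]
so choosing $\ve_0^2\le 1/C$ improves $\eqref{3a1}_2$ to $\eqref{3a2}_2$.

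Third, and this is where the main obstacle sits, I need to improve the $\na\mu(\rho)$ bound from $4M$ to $2M$. Differentiating $\partial_t\mu(\rho)+u\cdot\na\mu(\rho)=0$ in space, multiplying by $|\na\mu(\rho)|^{q-2}\na\mu(\rho)$ and integrating yields
\[
\frac{d}{dt}\|\na\mu(\rho)\|_{L^q}\le C\|\na u\|_{L^\infty}\|\na\mu(\rho)\|_{L^q},
\]
so the improvement follows if $\int_0^T\|\na u\|_{L^\infty}\,dt\le C^{-1}\log 2$. To control this integral I would apply Gagliardo--Nirenberg \eqref{g2} with $r=p_0>3$, writing $\|\na u\|_{L^\infty}\le C\|\na u\|_{L^2}^\theta\|\na^2 u\|_{L^{p_0}}^{1-\theta}$ for an appropriate $\theta\in(0,1)$, and then using the second-step Stokes bound in the form $\|\na^2 u\|_{L^{p_0}}\le C(M)\,(\|\sqrt\rho\,u_t\|_{L^2}+\|\na u\|_{L^2}^2)$. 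Combining Hölder in time with the $L^4_tL^2_x$ bound on $\na u$ and the $L^2_tL^2_x$ bound on $\sqrt\rho\,u_t$ just established, everything is dominated by a positive power of $\|\na u_0\|_{L^2}$, so that taking $\ve_0$ sufficiently small also closes the transport estimate and gives $\eqref{3a2}_1$. The delicate point is that the Stokes constant in Lemma~\ref{stokes}(a) carries the factor $\|\na\mu(\rho)\|_{L^q}^{q(5r-6)/(2r(q-3))}$, which is precisely why the bound on $\|\na\mu(\rho)\|_{L^q}$ is built into the bootstrap hypothesis \eqref{3a1}$_1$; with that bound in place all Stokes constants depend only on $M$, and the two smallness conditions are closed simultaneously by a single choice of $\ve_0$.
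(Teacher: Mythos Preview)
Your first two steps are essentially the paper's Lemma~\ref{lem-ed1}\,(\ref{ed1})--(\ref{ed2}), and the way you close \eqref{3a2}$_2$ from them is fine (the paper uses the exponentially weighted version $\int_0^T\|\na u\|_{L^2}^4\,dt\le\bigl(\sup e^{\lambda t}\|\na u\|_{L^2}^2\bigr)^2\int_0^Te^{-2\lambda t}\,dt$, but your unweighted splitting works too). One small correction: in Step~2 the cross term with $\na\mu(\rho)$ is handled using only $\|\na^2u\|_{L^2}$ (not $L^{p_0}$), exactly as in \eqref{z1.3}--\eqref{3d5}.

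The gap is in Step~3. Your key claim
\[
\|\na^2 u\|_{L^{p_0}}\le C(M)\bigl(\|\sqrt\rho\,u_t\|_{L^2}+\|\na u\|_{L^2}^2\bigr)
\]
is false for $p_0>2$. Lemma~\ref{stokes}(a) gives $\|\na^2u\|_{L^r}\le C\|f\|_{L^r}+C(M)\|f\|_{L^{6/5}}$ with $f=\rho u_t+\rho u\cdot\na u$, so you need $\|\rho u_t\|_{L^r}$ for some $r>3$. Since $\rho$ may vanish, there is no way to bound $\|\rho u_t\|_{L^r}$ by $\|\sqrt\rho\,u_t\|_{L^2}$ alone; you must interpolate against $\|\sqrt\rho\,u_t\|_{L^6}\le C\bar\rho^{1/2}\|\na u_t\|_{L^2}$, and that quantity is not supplied by Steps~1--2. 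The paper closes this by a \emph{third} energy level: differentiating $\eqref{1.1}_2$ in $t$ and testing with $u_t$ (see \eqref{3d2.1}--\eqref{3d2.6'}) yields the weighted bound \eqref{ed3},
\[
\sup_{[0,T]}\sigma e^{\lambda t}\|\sqrt\rho\,u_t\|_{L^2}^2+\int_0^T\sigma e^{\lambda t}\|\na u_t\|_{L^2}^2\,dt\le C\|\na u_0\|_{L^2}^2,\qquad \sigma=\min\{1,t\}.
\]
Only then can one write (cf.~\eqref{3d4.2}, \eqref{3d4.6})
\[
\|\na u\|_{L^\infty}\le C\|\sqrt\rho\,u_t\|_{L^2}^{\frac{6-r}{2r}}\|\na u_t\|_{L^2}^{\frac{3r-6}{2r}}+\text{l.o.t.},
\]
and the $\sigma$-weight makes the time integral $\int_0^T\sigma^{-1/2}e^{-\lambda t/2}\,dt$ finite, giving $\int_0^T\|\na u\|_{L^\infty}\,dt\le C\|\na u_0\|_{L^2}$ and hence \eqref{3a2}$_1$. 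Without this additional estimate on $\na u_t$ your $L^\infty$ bound does not follow, and the bootstrap for $\|\na\mu(\rho)\|_{L^q}$ does not close.
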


Before proving Proposition \ref{pr1}, we establish some necessary a priori estimates, see Lemmas \ref{lem-ed1}--\ref{lem5.1}.

Setting
\be \la{laj1}{\sigma(t)} \triangleq \min\{1,t\},\,\,\lambda\triangleq    3\xmu/(4\|\n_0\|_{L^{3/2}}) , \ee
we start with  the following  exponential  decay-in-time  estimates on the solutions for large time, which  plays a crucial role  in our analysis.
\begin{lemma}\label{lem-ed1}Let $(\n, u, P)$ be a smooth solution to  \eqref{1.1}--\eqref{inf1} satisfying \eqref{3a1}.
Then  there exists  a generic positive constant  $C$  depending only on  $q,$ $\ban,$  $\xmu,$ $\smu,$ $\|\n_0\|_{L^{3/2}},$  and $M$ such that
\be\label{ed1} \sup_{t\in[0,T]}e^{\lambda t}\|\n^{1/2}u\|_{L^2}^2 +\int_0^Te^{\lambda t}\int |\na u|^2dxdt \le  C \|\na u_0\|_{L^2}^2,\ee
\be\label{ed2}   \sup_{t\in[0,T]}e^{\lambda t}\int |\na u|^2dx +\int_0^T e^{\lambda t}\int\rho|u_t|^{2}dxdt \le C  \|\na u_0\|_{L^2}^2, \ee
\be\label{ed3}   \sup_{t\in[0,T]}\sigma e^{\lambda t}\int\rho|u_t|^{2}dx +\int_0^T \sigma e^{\lambda t}\int  |\na u_t|^2dx dt \le C  \|\na u_0\|_{L^2}^2, \ee
\be\label{3d6.7}     \ba& \sup_{t\in[0,T]}\sigma e^{\lambda t}\xl(\|\na u\|_{H^1}^2+ \|P\|_{H^1}^2 \xr)+\int_0^Te^{\lambda t}\xl( \|\na u\|_{H^1}^2+ \|P\|_{H^1}^2 \xr)dt \leq C\|\na u_0\|_{L^2}^2,\ea\ee
and for any $p\in [2, \min\{6,q\}]$,
\be\label{3d6.1}  \ba \int_0^T \sigma e^{\lambda t}\left(\|\na u\|_{  W^{1, p}}^2
+ \|P\|_{  W^{1, p}}^2\right)dt\leq C(p)\|\na u_0\|_{L^2}^2.\ea\ee
\end{lemma}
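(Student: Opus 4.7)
The plan is to derive the five estimates sequentially, exploiting the incompressibility $\div u = 0$ at every stage: since $\|\n(\cdot,t)\|_{L^p}=\|\n_0\|_{L^p}$ for every $p$, the weighted Poincar\'e-type inequality
$$\int \n|u|^2\,dx \le \|\n\|_{L^{3/2}}\|u\|_{L^6}^2 \le C\|\n_0\|_{L^{3/2}}\|\na u\|_{L^2}^2$$
(combined with the Korn/\curl-\divg\ bounds of Lemmas \ref{crle1}--\ref{crle3}, valid for both \eqref{Dir1} and \eqref{Navi}) provides the exponential-decay mechanism that fixes the constant $\lm$ from \eqref{laj1}.

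\textbf{Step 1 (basic energy, \eqref{ed1}).} Test \eqref{1.1}$_2$ with $u$. Using the continuity equation and $\div u=0$, the convective and pressure terms cancel and we obtain
$$\frac{d}{dt}\int\n|u|^2\,dx + 4\int\mn|D(u)|^2\,dx = 0.$$
With $\mn\ge\xmu$ and $2\|D(u)\|_{L^2}^2=\|\na u\|_{L^2}^2$ (plus a boundary correction in the slip case, controlled by Lemma \ref{crle1}), the weighted Poincar\'e inequality above yields
$$\frac{d}{dt}\int\n|u|^2\,dx + \frac{4\xmu}{3\|\n_0\|_{L^{3/2}}}\int\n|u|^2\,dx + 2\xmu\int|\na u|^2\,dx\le 0.$$
Multiplying by $e^{\lm t}$ with $\lm$ as in \eqref{laj1} and integrating gives \eqref{ed1}.

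\textbf{Step 2 (\eqref{ed2}).} Test the momentum equation with $u_t$. Because $u$ is divergence-free, the pressure term drops and one gets the identity
$$\int\n|u_t|^2\,dx + \frac{d}{dt}\int\mn|D(u)|^2\,dx = -\int\n u\cdot\na u\cdot u_t\,dx + \int\pa_t\mn\,|D(u)|^2\,dx.$$
The first term on the right is absorbed by $\tfrac12\int\n|u_t|^2$ after using H\"older/Sobolev together with \eqref{ed1}. For the second, $\pa_t\mn=-u\cdot\na\mn$, which is estimated by the hypothesis $\|\na\mn\|_{L^q}\le 4M$ via Gagliardo--Nirenberg \eqref{g1}. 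Multiplying by $e^{\lm t}$, integrating, and using \eqref{ed1} together with the smallness in \eqref{3a1} closes the bound. Note this is the first place where Stokes theory (Lemma \ref{stokes}) is used to upgrade $\|\na u\|_{L^2}$ to control of $\|\na^2 u\|_{L^2}$ in the intermediate estimates of the nonlinear terms.

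\textbf{Step 3 (\eqref{ed3}).} Differentiate \eqref{1.1}$_2$ in $t$ and test with $u_t$, with time-weight $\si(t)e^{\lm t}$; the factor $\si(t)$ absorbs the lack of an initial compatibility condition for $\sqrt\n u_t|_{t=0}$. The resulting identity produces the good terms $\tfrac12\pa_t(\si e^{\lm t}\|\sqrt\n u_t\|_{L^2}^2)$ and $\si e^{\lm t}\int\mn|D(u_t)|^2$, together with a number of commutator terms involving $\n_t,\ u_t\cdot\na u,\ u\cdot\na u_t$, and $\pa_t\mn\,D(u)\,D(u_t)$. Each is estimated using H\"older/Sobolev, the previous two steps, and the smallness \eqref{3a1}; the $\n_t$-terms use $\n_t=-u\cdot\na\n$ plus the $H^1$-control of $\n$ propagated from \eqref{oiq1}.

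\textbf{Step 4 (\eqref{3d6.7} and \eqref{3d6.1}).} Read the momentum equation as the stationary Stokes problem
$$-\div(2\mn D(u))+\na P = -\n u_t - \n u\cdot\na u, \qquad \div u=0,$$
and apply Lemma \ref{stokes}(a) with $f=-\n u_t-\n u\cdot\na u$. Estimating $\|f\|_{L^{6/5}}$ and $\|f\|_{L^r}$ by the bounds from Steps 2--3 (and $\|\na u\|_{L^2}$, $\|\na\mn\|_{L^q}$ from \eqref{3a1}) gives the $H^1$ and $W^{1,p}$ estimates on $(\na u,P)$, with the correct $e^{\lm t}$ and $\si e^{\lm t}$ weights.

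\textbf{Main obstacle.} The principal difficulty is controlling the cross-term $\int\pa_t\mn\,|D(u)|^2$ and its higher-order analog in Step 3: since $\mn$ only has $L^q$ control on its gradient (not pointwise smallness), these terms must be handled by Gagliardo--Nirenberg interpolation together with the Stokes regularity of Lemma \ref{stokes}, and one must be careful to keep the resulting constants absorbable under the smallness assumption $\|\na u_0\|_{L^2}\le\ve_0$. A secondary issue is the treatment of boundary terms when integrating by parts against $u_t$ under the slip condition \eqref{Navi}, where $u_t\cdot n=0$ on $\pa\om$ is used but $D(u_t)\,n$ is not controlled; the $\curl$-based estimates of Lemmas \ref{crle1}--\ref{crle3} are invoked to bypass this.
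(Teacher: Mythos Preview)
Your outline is broadly correct through Steps 1, 2, and 4, and matches the paper's structure. But Step 3 contains a genuine circularity that breaks the argument.

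You propose to handle the $\n_t$-terms via $\n_t=-u\cdot\na\n$ together with ``the $H^1$-control of $\n$ propagated from \eqref{oiq1}.'' This is not available here: \eqref{oiq1} is a conclusion of Theorem \ref{thm1}, and the bound $\sup_t\|\na\n\|_{L^2}\le 2\|\na\n_0\|_{L^2}$ (namely \eqref{3d6.21}) is established only in Lemma \ref{lem5.a3}, whose proof rests on Lemma \ref{lem5.1} and hence on Lemma \ref{lem-ed1} itself. Invoking it inside Lemma \ref{lem-ed1} is circular. Even setting aside the circularity, the lemma asserts that $C$ depends only on $q,\ban,\xmu,\smu,\|\n_0\|_{L^{3/2}},M$; any route through $\|\na\n\|_{L^2}$ would force a dependence on $\|\na\n_0\|_{L^2}$, contradicting the statement.

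The paper avoids $\na\n$ entirely at this stage by writing $\n_t=-\div(\n u)$ and integrating by parts. Concretely, after differentiating $\eqref{1.1}_2$ in $t$ and testing with $u_t$, the dangerous contributions $-\tfrac12\int\n_t|u_t|^2\,dx$ and $-\int\n_t u\cdot\na u\cdot u_t\,dx$ are rewritten as
\[
-\int\n u\cdot\na u_t\cdot u_t\,dx\quad\text{and}\quad -\int\n u\cdot\na(u\cdot\na u\cdot u_t)\,dx,
\]
respectively, so that only $\|\n\|_{L^\infty}\le\ban$ and Sobolev/Gagliardo--Nirenberg on $u,u_t$ are needed (see \eqref{3d2.1}--\eqref{3d2.5}). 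This is the missing idea in your Step 3.

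A minor remark on Step 2: your treatment of $\int\pa_t\mn\,|D(u)|^2\,dx$ via $\|\na\mn\|_{L^q}$ is workable, but the paper instead integrates by parts once more (using $\div u=0$) to obtain $\int\mn\,u\cdot\na|D(u)|^2\,dx$, which is then bounded by $C\smu\|u\|_{L^6}\|\na u\|_{L^3}\|\na^2 u\|_{L^2}$ with no reference to $\na\mn$ at all (see \eqref{z1.3}). Both close, but the paper's version keeps the estimate for \eqref{ed2} entirely free of $M$ until the Stokes constant enters through \eqref{3d4}.
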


\begin{proof}
First, standard arguments (\cite{L1996}) indicate that \begin{equation}\label{gj0}
0\le\n\leq \bar\n,~~~~~\|\n\|_{L^{3/2}}= \|\n_0\|_{L^{3/2}},
\end{equation}
and then by \eqref{3rd12},
\be\ba\label{ed1.2}
 \|\n^{1/2}u\|_{L^2}^2\le \|\n\|_{L^{3/2}}\|u\|_{L^6}^2\le  \frac{4}{3}\|\n_0\|_{L^{3/2}}\|\na u\|_{L^2}^2\le    \lambda^{-1}\int 2\mn |D(u)|^2dx,
\ea\ee
with  $\lambda$ is defined as in \eqref{laj1}.

Multiplying \eqref{1.1}$_2$ by $u$ and integrating by parts, we have
\be\ba\label{ed1.1}
 \frac{1}{2}\frac{d}{dt}\|\n^{1/2}u\|_{L^2}^2+\int 2\mn |D(u)|^2dx=0,
\ea\ee
 with together with \eqref{ed1.2} yields
$$  \frac{d}{dt}\|\n^{1/2}u\|_{L^2}^2+\lambda \|\n^{1/2}u\|_{L^2}^2+\int 2\mn |D(u)|^2dx\le 0.
$$
Therefore,
\bnn\ba&
 \sup_{t\in[0,T]}e^{\lambda t}\|\n^{1/2}u\|_{L^2}^2 +\int_0^Te^{\lambda t}\int |\na u|^2dxdt\\ &\le C \|\n_0^{1/2}u_0\|_{L^2}^2 \le C\|\n_0\|_{L^{\frac{3}{2}}}\|u_0\|_{L^6}^2  \le  C \|\na u_0\|_{L^2}^2,
\ea\enn
and \eqref{ed1} is established.

Next, it follows from  Lemma \ref{stokes}, \eqref{1.1}$_1$, \eqref{3a1},   \eqref{gj0}, and Gagliardo-Nirenberg's inequality that
\bnn
\ba
 \|\na  u\|_{H^1}+ \|  P \|_{H^1}
  &\le C \xl(\|\n  u_t+\n u\cdot\na u\|_{L^2}+\|\n  u_t+\n u\cdot\na u \|_{L^{6/5}}\xr)\\
   &\le C ( \ban^{1/2}+\|\n\|^{1/2}_{L^{3/2}})\xl(\|\n^{1/2} u_t\|_{L^2}+\ban^{1/2} \|  u\cdot \na  u\|_{L^2}\xr) \\
 &\le C  \|\n^{1/2} u_t\|_{L^2}+C \|\na u\|_{L^2} \|\na u\|_{L^2}^{1/2}\|\na^2 u\|_{L^2}^{1/2}\\
      &\le C \|\n^{1/2} u_t\|_{L^2}+C \|\na u\|_{L^2}^3 +\frac12 \|\na^2 u\|_{L^2},
\ea\enn which immediately leads to
\begin{equation} \label{3d4}
\ba
 &\|\na u\|_{H^1}+ \|  P \|_{H^1}+\|\n   u_t+\n u\cdot\na u \|_{L^{6/5}\cap L^2}\\
   &\le C \|\n^{1/2} u_t\|_{L^2}+C \|\na u\|_{L^2}^3.
\ea\end{equation}
Multiplying \eqref{1.1}$_1$ by $u_t$, and by \eqref{3d4}, we get
\begin{equation}\label{z1.3}
\ba
& \frac{d}{dt}\int \mn |D(u)|^2dx+\int\rho|u_t|^{2}dx\\
&=-\int\n u\cdot\na u\cdot u_tdx+ \int  \mn u\cdot\na |D(u)|^2dx\\
&\le \ban^{1/2}\|\n^{1/2}u_t\|_{L^2}\|u\|_{L^6}\|\na u\|_{L^3}+C\smu \|u\|_{L^6}\|\na u\|_{L^3}\|\na^2 u\|_{L^2} \\
&\le C \|\n^{1/2}u_t\|_{L^2}  \|\na u\|_{L^2}  \|\na u\|_{L^2}^{1/2} \|\na^2 u\|_{L^2}^{1/2}   +C  \|\na u\|_{L^2} \|\na u\|_{L^2}^{1/2}\|\na^2 u\|_{L^2}^{3/2} \\
&\le \frac{1}{2}\|\n^{1/2}u_t\|_{L^2}^2+C \|\na u\|_{L^2}^6,
\ea\end{equation}
which implies that
  \be\label{3d5}
\ba
& \frac{d}{dt}\int  2\mn  |D(u)|^2dx+\int\rho|u_t|^{2}dx\le C  \|\na u\|_{L^2}^4\|\na u\|_{L^2}^2.
\ea\ee
Hence, by Gr\"onwall's inequality, \eqref{3a1}  and \eqref{ed1}, we arrive at \eqref{ed2}.

Now we will claim \eqref{ed3}. Imposing $u_t\cdot\partial_{t}$ on \eqref{1.1}$_2$, using integration by parts and \eqref{1.1}$_1$, we check that
\be\ba\label{3d2.1}&
\frac12\frac{d}{dt}\int\rho|u_t|^{2}dx+\int2\mn|D(u_t)|^{2}dx   \\
&=-\int (2\n u\cdot\na u_t\cdot u_t+\n u_t\cdot\na u\cdot u_t)dx-\int\n u\cdot\na(u\cdot\na u\cdot u_t)dx\\
&\quad+2\int \xl(u\cdot\na\mn\xr) D(u)\cdot D(u_t)dx \triangleq \sum_{i=1}^3I_i.
\ea\ee

 On the other hand, by Gagliardo-Nirenberg's and Young's inequalities, \eqref{3a1} and \eqref{gj0},
\be\ba\label{3d2.2}
 | I_1|
  &\le C \|\n^{1/2}u_t\|_{L^3}\|\na u_t\|_{L^2}\|u\|_{L^6}+C \|\n^{1/2}u_t\|_{L^3}\|\na u\|_{L^2}\|u_t\|_{L^6} \\
  &\le C \|\n^{1/2}u_t\|_{L^2}^{1/2}\|\na u_t\|_{L^2}^{3/2}\|\na u \|_{L^2}\\
    &\le  \frac{1}{4}\xmu\|\na u_t\|_{L^2}^2+C \|\n^{1/2}u_t\|_{L^2}^2\|\na u \|_{L^2}^4,
\ea\ee
\be\ba\label{3d2.3}
  |I_2| &=\left|\int\n u\cdot\na(u\cdot\na u\cdot u_t)dx\right|\\
  &\le C\int \n|u||u_t|\left(|\na u|^2+|u| |\na^2u|\right)dx+\int\n|u|^2|\na u||\na u_t|dx\\
  &\le C\|u\|_{L^6}\|u_t\|_{L^6} \xl( \|\na u\|_{L^3}^2+   \|u\|_{L^6} \|\na^2 u\|_{L^2} \xr)+ C  \|u\|_{L^6}^2\|\na u\|_{L^6}\|\na u_t\|_{L^2}\\
  &\le C  \|\na u_t\|_{L^2}\|\na^2 u\|_{L^2}\|\na u\|_{L^2}^{2}\\
    &\le  \frac{1}{8}\xmu\|\na u_t\|_{L^2}^2+C \|\na^2 u\|_{L^2}^2 \|\na u\|_{L^2}^4,
\ea\ee and
\be\ba\label{3d2.5}
  |I_3 |   &\le  C\|\na\mn\|_{L^q} \|u\|_{L^\infty} \|\na u_t\|_{L^2}\|\na u\|_{L^{\frac{2q}{q-2}}}\\    &\le  C(q,M)\|u\|_{L^6}^{1/2}\|\na u\|_{L^6}^{1/2} \|\na u_t\|_{L^2}\|\na u\|_{L^2}^{\frac{q-3}{q}}\|\na^2 u\|_{L^2}^\frac{3}{q} \\    &\le  \frac{1}{8}\xmu\|\na u_t\|_{L^2}^2+C \|\na u \|_{L^2} \|\na^2 u\|_{L^2}^3+ C \|\na u \|_{L^2}^4.
\ea\ee

  Substituting \eqref{3d2.2}--\eqref{3d2.5} into \eqref{3d2.1}, and by \eqref{3d4}, we conclude that
\be\ba\label{3d2.6}&
 \frac{d}{dt}\int\rho|u_t|^{2}dx+\xmu\int|\nabla u_t|^{2}dx   \\
&\le  C \left(\|\rho^{1/2} u_t\|_{L^2}^2 +   \|\na^2 u\|_{L^2}^2 \right)\|\na u\|_{L^2}^4+C \|\na u \|_{L^2} \|\na^2 u\|_{L^2}^3+ C \|\na u \|_{L^2}^4 \\
&\le C \|\rho^{1/2}u_t\|_{L^2}^2 \|\na u \|_{L^2}^4 +C \|\rho^{1/2}u_t\|_{L^2}^3\|\na u \|_{L^2}  +C \|\na u \|_{L^2}^{10} +C \|\na u \|_{L^2}^{2},
\ea\ee
which, along with \eqref{ed2} gives
 \be\ba\label{3d2.6'}&
 \frac{d}{dt}\int\rho|u_t|^{2}dx+\xmu\int|\nabla u_t|^{2}dx
\\
&\le C \|\rho^{1/2}u_t\|_{L^2}^2 \xl(\|\na u \|_{L^2}^4 + \|\rho^{1/2}u_t\|_{L^2} \|\na u \|_{L^2}\xr)\\&\quad +C  \|\na u \|_{L^2}^{4}+C \|\na u \|_{L^2}^{2}.
\ea\ee
As a result, by Gr\"onwall's inequality, \eqref{ed1}, and \eqref{ed2}, we prove \eqref{ed3}.

Obviously , \eqref{3d6.7} comes from a combination of \eqref{3d4} and \eqref{ed1}-- \eqref{ed3}.

Finally, by Gagliardo-Nirenberg's inequality, we find that for any $   p\in [2,\min\{6,q\}],$
\bnn\label{nz1.4}
\ba&
\|\n u_t+\n u\cdot\na u\|_{L^p} \\
   &\le C\|\n^{1/2} u_t\|_{L^2}^{\frac{6-p}{2p}}\|\n^{1/2} u_t\|_{L^6}^{\frac{3p-6}{2p}}+C\|  u\|_{L^6}\|\na u\|_{L^{\frac{6p}{6-p}}}\\
      &\le C\|\n^{1/2} u_t\|_{L^2}^{\frac{6-p}{2p}}\|\na u_t\|_{L^2}^{\frac{3p-6}{2p}}
      + C\|\na u\|_{L^2}\|\na u\|_{L^2}^{\frac{p}{5p-6}}\|\na^2 u\|_{L^p}^{\frac{4p-6}{5p-6}},
\ea\enn
which together with     \eqref{3rd3'}  and  \eqref{3d4}  shows
\be\ba\label{3d4.21}       \|\na^2u\|_{L^p} +\|\na P\|_{L^p}
 &\le C \|\n u_t+\n u\cdot\na u\|_{L^{6/5}\cap L^p}   \\
&\le   C\|\n^{1/2} u_t\|_{L^2}^{\frac{6-p}{2p}}\|\na u_t\|_{L^2}^{\frac{3p-6}{2p}}+ C\|\na u\|_{L^2}^{\frac{6p-6}{p}}\\
&\quad+\frac12\|\na^2 u\|_{L^p}+C\|\n^{1/2} u_t\|_{L^2}+C \|\na u\|_{L^2}^3 .\ea\ee
Consequently,
\be\ba\label{3d4.2}       \|\na^2u\|_{L^p} +\|\na P\|_{L^p}
&\le   C\|\n^{1/2} u_t\|_{L^2}^{\frac{6-p}{2p}}\|\na u_t\|_{L^2}^{\frac{3p-6}{2p}}+ C\|\na u\|_{L^2}^{\frac{6p-6}{p}}\\
&\quad + C\|\n^{1/2} u_t\|_{L^2}+C \|\na u\|_{L^2}^3.\ea\ee
Along with  \eqref{ed1}-- \eqref{ed3}, we derive \eqref{3d6.1} and complete the proof of Lemma \ref{lem-ed1}.
\end{proof}

In order to obtain  the $L^\infty(0,T;L^q)$-norm of the  gradient of $\mn$, we need  to prove the following conclusion which could be regarded as a direct result of  Lemma \ref{lem-ed1}.

\begin{lemma}\label{lem5.1} Let $(\n, u, P)$ be a  smooth  solution to  \eqref{1.1}--\eqref{inf1} satisfying \eqref{3a1}.  Then there exists  a generic positive constant  $C$  depending only on  $q,$ $\ban,$ $\xmu,$ $\smu,$ $\|\n_0\|_{L^{3/2}},$  and $M$ such that
\be\label{3d4.1}    \int_0^T \|\na u\|_{L^\infty}dt \le C  \|\na u_0\|_{L^2} . \ee
\end{lemma}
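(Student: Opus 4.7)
The plan is to control $\|\na u\|_{L^\infty}$ by a Gagliardo--Nirenberg interpolation between $\|\na u\|_{L^2}$ (which enjoys pointwise exponential decay) and $\|\na^2 u\|_{L^p}$ (which enjoys a $\sigma$-weighted integral bound), then integrate in $t$ via H\"older, so that the weights match up exactly and produce a linear dependence on $\|\na u_0\|_{L^2}$.

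Concretely, I would first fix any $p\in(3,p_0]$ with $p_0=\min\{6,q\}$; such a $p$ exists because $q>3$. Applying Lemma \ref{l1} (the exterior-domain GN inequality \eqref{g2}) to $g=\na u$ with the auxiliary exponent $q=2$ there and $r=p$ gives
\[
\|\na u\|_{L^\infty}\le C\|\na u\|_{L^2}^{\alpha}\|\na^2 u\|_{L^p}^{\beta},\qquad
\alpha=\frac{2p-6}{5p-6},\quad \beta=\frac{3p}{5p-6}.
\]
A direct check shows $\alpha+\beta=1$ and, because $p>3$, $\beta<1$. These two facts are the two structural inputs the argument needs.

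Next, by H\"older's inequality in time with exponents $2/\beta$ and $2/(2-\beta)$, inserting the weight $\sigma e^{\lambda t}$,
\[
\int_0^T\|\na u\|_{L^\infty}dt\le C\Bigl(\int_0^T \sigma e^{\lambda t}\|\na^2 u\|_{L^p}^2\,dt\Bigr)^{\beta/2}\Bigl(\int_0^T \|\na u\|_{L^2}^{\frac{2\alpha}{2-\beta}}(\sigma e^{\lambda t})^{-\frac{\beta}{2-\beta}}dt\Bigr)^{(2-\beta)/2}.
\]
The first bracket is bounded by $C\|\na u_0\|_{L^2}^2$ thanks to \eqref{3d6.1} (applied with this $p$), contributing $C\|\na u_0\|_{L^2}^{\beta}$. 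In the second bracket, the pointwise estimate $\|\na u(\cdot,t)\|_{L^2}\le C\|\na u_0\|_{L^2}e^{-\lambda t/2}$ coming from \eqref{ed2} turns the integrand into $C\|\na u_0\|_{L^2}^{2\alpha/(2-\beta)}\sigma^{-\beta/(2-\beta)}e^{-\lambda t/(2-\beta)}$, after using $\alpha+\beta=1$ to collect the exponential factors. Since $\beta<1$ implies $\beta/(2-\beta)<1$, the singularity $\sigma^{-\beta/(2-\beta)}$ is integrable near $t=0$, and the exponential gives integrability at infinity, so the second bracket is finite and contributes $C\|\na u_0\|_{L^2}^{\alpha}$.

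Multiplying the two contributions gives $C\|\na u_0\|_{L^2}^{\alpha+\beta}=C\|\na u_0\|_{L^2}$, which is \eqref{3d4.1}. The only real obstacle is bookkeeping: one has to verify that the interpolation exponent $\beta$ is strictly less than $1$ so that the time weight $\sigma^{-\beta/(2-\beta)}$ is integrable on $(0,1)$, and this is exactly where the assumption $q>3$ (forcing $p_0>3$ and permitting the choice $p>3$) is used in an essential way.
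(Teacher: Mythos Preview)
Your proof is correct. Both your argument and the paper's rely on controlling $\|\na u\|_{L^\infty}$ via $\|\na u\|_{L^2}$ and a second-order norm with an exponent $r\in(3,p_0]$, then integrating against the $\sigma e^{\lambda t}$ weight so that the near-zero singularity and the large-time exponential both become harmless; the key structural point, that $r>3$ forces the second-order exponent strictly below $1$, is the same in both. The technical execution differs: the paper uses the additive Sobolev bound $\|\na u\|_{L^\infty}\le C\|\na u\|_{L^2}+C\|\na^2 u\|_{L^r}$, then unpacks $\|\na^2 u\|_{L^r}$ pointwise via the Stokes estimate \eqref{3d4.2} into $\|\n^{1/2}u_t\|_{L^2}$, $\|\na u_t\|_{L^2}$, and powers of $\|\na u\|_{L^2}$, and finally integrates using \eqref{ed2} and \eqref{ed3} directly. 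You instead use the multiplicative Gagliardo--Nirenberg form \eqref{g2} and apply H\"older in $t$ against the already-packaged weighted integral bound \eqref{3d6.1}. Your route is a bit more streamlined since \eqref{3d6.1} already absorbs the Stokes regularity step; the paper's route is slightly more explicit about the underlying mechanism but reaches the same conclusion with the same dependence on $\|\na u_0\|_{L^2}$.
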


\begin{proof} Setting \be\la{zv1}r\triangleq\frac12\min\{q+3,9\}\in\xl(3,\min\xl\{ q,6\xr\}\xr),\ee
By Sobolev inequality  and \eqref{3d4.2}, we check that
\be\label{3d4.6} \ba    \|\na u\|_{L^\infty}  & \le C\|\na  u\|_{L^2}+C\|\na^2 u\|_{L^r}\\ &\le C\|\n^{1/2}u_t\|_{L^2}^\frac{6-r}{2r}\|\na u_t\|_{L^2}^\frac{3r-6}{2r}+C \|\n^{1/2}u_t\|_{L^2}+C\|\na u\|_{L^2}^\frac{6(r-1)}{r}+C\|\na u\|_{L^2}\\
&\le C\sigma^{-\frac{1}{2}}e^{-\lambda t/2}(\sigma e^{\lambda t}\|\n^{1/2}u_t\|_{L^2}^2)^\frac{6-r}{4r}(\sigma e^{\lambda t}\|\nabla u_t\|_{L^2}^2)^\frac{3r-6}{4r}\\
&\quad +Ce^{-\lambda t/2} \left(\|\na u_0\|_{L^2}+(e^{\lambda t}\|\n^{1/2}u_t\|_{L^2}^2)^\frac{1}{2}\right),\ea\ee

which along with \eqref{3a1}, \eqref{ed2}, and \eqref{ed3} implies that
\bnn\ba    & \int_0^T \|\na u\|_{L^\infty}dt \\
&\le   C \|\na u_0\|_{L^2}^\frac{6-r}{2r} \xl(\int_0^T \sigma^{-\frac{2r}{r+6}}e^{-\frac{2\lambda r}{r+6}t}dt\xr)^\frac{r+6}{4r} \xl(\int_0^T\sigma e^{\lambda t}\|\nabla u_t\|_{L^2}^2dt\xr)^\frac{3r-6}{4r}+C \|\na u_0\|_{L^2}\\
&\le  C \|\na u_0\|_{L^2}.\ea\enn
\end{proof}

With  Lemmas \ref{lem-ed1}--\ref{lem5.1} at hand, it's time to give the proof of  Proposition \ref{pr1}.

\emph{\bf Proof of Proposition \ref{pr1}.}  Since $\mn$ satisfies
\be\label{3d5.4}   ( \mn)_t+  u\cdot\na  \mn=0,\ee and then
\be\label{3d5.5}   \frac{d}{dt}\|\na\mn\|_{L^{q}}\le q\|\na u\|_{L^\infty}\|\na\mn\|_{L^{q}},\ee
which, by Gr\"onwall's inequality and \eqref{3d4.1}, yields
\be \la{cbd2z}\ba    \sup_{t\in[0,T]} \|\na\mn\|_{L^{q}}&\le \|\na\mu(\n_0)\|_{L^{q}} \exp\xl\{ q\int_0^T \|\na u\|_{L^\infty}dt \xr\}\\
&\le \|\na\mu(\n_0)\|_{L^{q}} \exp\xl\{  C \|\na u_0\|_{L^2}\xr\}\\
&\le 2\|\na\mu(\n_0)\|_{L^{q}},
\ea\ee
provided
\be\ba\label{3d5.6}   \|\na u_0\|_{L^2} \le\ve_1\triangleq   C^{-1} \ln 2.
\ea\ee

 Moreover, it follows from  \eqref{ed2} that
\begin{equation}\label{3a1.2} \ba
 \int_0^T\|\na u\|_{L^2}^4dt
 \le& \sup_{t\in[0,T]}\xl(e^{\lambda t}\|\na u\|_{L^2}^2\xr)^2\int_0^Te^{-2\lambda t}dt\\
 \le & C  \|\na u_0\|_{L^2}^4\le  \|\na u_0\|_{L^2}^2,
\ea\end{equation}
provided
\be\ba\label{3cb5.6}    \|\na u_0\|_{L^2} \le\ve_2\triangleq   C^{-1/2}  .
\ea\ee

Choosing $\ve_0\triangleq \min\{1,\ve_1,\ve_2\},$ and by \eqref{cbd2z}--\eqref{3cb5.6}, we establish \eqref{3a2} .\hfill$\Box$

In the following lemma, we will continue to investigate some higher-order estimates of the system \eqref{1.1}.

\begin{lemma}\label{lem5.a3}  Under the assumptions of Proposition \ref{pr1}, there exists a positive constant $C$ depending only on $q$, $\ban$, $\xmu$, $\smu$,  $\|\n_0\|_{L^{3/2}}$, $M$, and $\|\na\n_0\|_{L^2} $  such that for $ p_0\triangleq\min\{6,q\} $ and $ q_0\triangleq 2q/(q-3)$,
\be\label{3dr6.1}  \ba& \sup_{t\in[0,T]}\sigma^{q_0}   e^{\lambda t}\xl(\|\na u\|^2_{W^{1,p_0}}+ \|P\|^2_{W^{1,p_0}}+ \|\na u_t\|^2_{L^2}\xr)   \\&
 +\int_0^T\sigma^{q_0} e^{\lambda t} \xl(\sigma^\frac{1}{2}\|(\n  u_{t})_t \|_{L^2}^2+  \|\na u_t\|_{  L^{ p_0}}^2
+  \|P_t\|_{L^2\cap L^{ p_0}}^2\xr)dt\leq C,\ea\ee
provided $\|\na u_0\|_{L^2}<\ve_0$, where $\ve_0$ is given in Proposition \ref{pr1}.
\end{lemma}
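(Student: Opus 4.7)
The plan is to upgrade the $\sigma$-weighted first-order estimates of Lemma \ref{lem-ed1} to the $\sigma^{q_0}$-weighted second-order ones here by (i) running a second-order-in-time energy identity for $u_t$ to control $\|\na u_t\|_{L^2}$, and (ii) bootstrapping via the Stokes regularity of Lemma \ref{stokes} to pass from $L^\infty_t(L^2)$-control of $\na u_t$ to $L^\infty_t(W^{1,p_0})$-control of $\na u, P$ and $L^2_t(L^{p_0})$-control of $\na u_t, P_t$. The weight $\sigma^{q_0}$ with $q_0=2q/(q-3)$ is forced by \eqref{3d4.2}: squaring it at $p=p_0$ and tracking the implicit factor $\|\na\mn\|_{L^q}^{q(5p_0-6)/(2p_0(q-3))}$ from \eqref{3rd3'} shows that $\|\na u\|_{W^{1,p_0}}^2+\|P\|_{W^{1,p_0}}^2$ can blow up near $t=0$ only like $\sigma^{-q_0}$, and the prefactor $\sigma^{q_0}$ is chosen to cancel this exactly.

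The core step is the second-order energy estimate. Differentiating \eqref{1.1}$_2$ in $t$ and using $\n_t=-\div(\n u)$, $(\mn)_t=-u\cdot\na\mn$ yields an equation of the form
$$\n u_{tt}+\n u\cdot\na u_t-\div(2\mn D(u_t))+\na P_t = G,$$
where $G$ collects $-\div(\n u\otimes u_t)-\div(\n u_t\otimes u)+\div(2(u\cdot\na\mn)D(u))-(\n u)_t\cdot\na u$. Pairing this equation against $u_{tt}$ and integrating by parts (using \eqref{3rd12}) should produce
$$\frac{d}{dt}\int\mn|D(u_t)|^2dx+\frac12\int\n|u_{tt}|^2dx \le C\|\na u\|_{L^\infty}\|\na u_t\|_{L^2}^2+R(t),$$
with the remainder $R(t)$ controllable by H\"older, Gagliardo--Nirenberg \eqref{g1}, and the bounds from \eqref{3a1}, \eqref{ed1}--\eqref{3d6.1}. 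Multiplying by $\sigma^{q_0}e^{\lambda t}$ and applying Gr\"onwall --- the time-integral of $\|\na u\|_{L^\infty}$ is finite by \eqref{3d4.1}, and the derivative-of-weight term $q_0\sigma^{q_0-1}e^{\lambda t}\int\mn|D(u_t)|^2$ is absorbed using \eqref{ed3} --- delivers both the $\sup_t\sigma^{q_0}e^{\lambda t}\|\na u_t\|_{L^2}^2$ bound and the time-integrated $\|\n^{1/2}u_{tt}\|_{L^2}^2$ estimate, hence also $\int\sigma^{q_0+1/2}e^{\lambda t}\|(\n u_t)_t\|_{L^2}^2\,dt$ via $(\n u_t)_t=\n u_{tt}-\div(\n u)u_t$.

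With $\|\na u_t\|_{L^2}$ now in hand, the $W^{1,p_0}$ bounds on $\na u$ and $P$ drop out of \eqref{3d4.2} at $p=p_0$. For the $L^2_tL^{p_0}$ bounds on $\na u_t$ and $P_t$, I would view the differentiated equation above as a Stokes system for $(u_t,P_t)$, cast the convective and varying-viscosity pieces of $G$ in divergence form whenever possible, and invoke Lemma \ref{stokes}(b) with $\ti r=p_0$. The main obstacle is the varying-viscosity contribution $\div(2(u\cdot\na\mn)D(u))$ in $G$, whose $L^{p_0}$-control requires interpolating $\na^2 u$ between $L^2$ and $L^{p_0}$ with only $L^q$-information on $\na\mn$; this is precisely where the $\sigma^{-q_0/2}$ singularity of the Stokes estimate originates, and the prefactor $\sigma^{q_0}$ is tailored to absorb its square. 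The argument then closes on each $[\tau,T]$ by standard continuity, with the exponential factor $e^{\lambda t}$ propagating throughout by virtue of \eqref{ed1}--\eqref{ed3}.
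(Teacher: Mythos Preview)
Your overall architecture --- differentiate in time, test against $u_{tt}$, then bootstrap via Lemma \ref{stokes} --- is exactly the paper's, but the core energy step is oversimplified and the proposed inequality does not hold as written.

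When you convert $\int 2\mn D(u_t):D(u_{tt})\,dx$ into $\tfrac{d}{dt}\int\mn|D(u_t)|^2\,dx$, the commutator produces the extra term
\[
\int (u\cdot\na\mn)\,|D(u_t)|^2\,dx,
\]
which is quadratic in $\na u_t$ and cannot be absorbed into either $C\|\na u\|_{L^\infty}\|\na u_t\|_{L^2}^2$ or a forcing $R(t)$ controllable by Lemma \ref{lem-ed1}: since $\na\mn$ is only in $L^q$ (not $L^\infty$), H\"older forces $\|\na u_t\|$ into $L^{2q/(q-1)}>L^2$. This is the paper's $J_4$ in \eqref{utt1}. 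The paper handles it by interpolating $\|\na u_t\|^2$ between $L^2$ and $L^{p_0}$ and then invoking the Stokes-type bound \eqref{3d.1} to control $\|\na u_t\|_{L^{p_0}}$ by $\|\sqrt\n\,u_{tt}\|_{L^2}$ (absorbed on the left) plus $(1+\|\na u\|_{H^1})\|\na u_t\|_{L^2}$. After Young's inequality this produces a Gr\"onwall coefficient of order $\|\na u\|_{H^1}^{q_0}$ --- \emph{not} $\|\na u\|_{L^\infty}$ --- and the integrability of this coefficient against the available bound $\sigma e^{\lambda t}\|\na u\|_{H^1}^2\le C$ from \eqref{3d6.7} is precisely what dictates the weight $\sigma^{q_0}$. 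So the exponent $q_0=2q/(q-3)$ originates in the $J_4$ interpolation, not in the Stokes constant $\|\na\mn\|_{L^q}^{q(5p_0-6)/(2p_0(q-3))}$ of \eqref{3rd3'} as you suggest.

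Two further points you gloss over: (i) the term $-2\int u\cdot\na(\mn)_t\,D(u):D(u_t)\,dx$ ($J_6$ in the paper) needs a separate chain of integrations by parts and another appeal to \eqref{3d.1}; (ii) several cross-terms paired with $u_{tt}$ (e.g.\ $\int\n_t u_t\cdot u_{tt}$ and $\int(u\cdot\na\mn)D(u):D(u_{tt})$) are not directly estimable and must be written as $\theta'(t)$ for a quantity $\theta$ bounded via \eqref{ij22}; the $\theta$ contribution is then handled at the endpoints after multiplying by $\sigma^{q_0}e^{\lambda t}$. Without these ingredients the Gr\"onwall argument does not close.
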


\begin{proof} Similar to the way we get \eqref{3d5.5} and \eqref{cbd2z}, we also have
\be\ba\label{3d6.21}   \sup_{0\le t\le T}\|\na\n\|_{L^2 } \le 2\|\na\n_0\|_{L^2 }.
\ea\ee   Hence, by Sobolev's inequality and \eqref{ed2},
\be \la{ij6}\ba\|\n_t\|_{L^2\cap L^{3/2}}&=\|u\cdot\na\n\|_{L^2\cap L^{3/2}}\\&\le C\|\na\n\|_{L^2}\|\na u\|_{L^2}^{1/2}\|\na u\|_{H^1}^{1/2} \le C\|\na u\|_{H^1}^{1/2} .\ea\ee

Next, it follows from \eqref{1.1}$_2$ that $u_t$ satisfies
\bnn\label{5rd3}
\begin{cases}
 -\div(2\mn D(u_t))  +\nabla P_t=\ti F+\div g,\\
 \div u_t=0,
\end{cases}
\enn  with the same boundary condition and the far field behavior as $u$, where\bnn  \ti F\triangleq  -
\n u_{tt}- \n u \cdot\na u_t-\n_tu_t-(\n u)_t\cdot\na u, \quad g\triangleq-2u\cdot\na\mn  D(u) . \enn
 Therefore, a straightforward application  of  Lemma \ref{stokes} indicates that
 \be \la{3'd.1}\ba  \|\na u_t\|_{ L^2\cap L^{p_0}}+\|P_t\|_{ L^2\cap L^{p_0}}     \le C_1\|\ti F\|_{L^{6/5}\cap L^{\frac{3p_0}{p_0+3}}}+C\|g\|_{L^2\cap L^{p_0}}.\ea\ee
 On the other hand, for the two terms on the
righthand side of \eqref{3'd.1},  we deduce from \eqref{3a1}, \eqref{gj0}, \eqref{ij6} and \eqref{ed2} that
 \be\la{ij1}\ba   &C_1\|\ti F\|_{L^{6/5}\cap L^{\frac{3p_0}{p_0+3}}}  \\ &\le  C\|\n\|^{1/2}_{L^{3/2}\cap L^{\frac{3p_0}{6-p_0}}}\|\n^{1/2} u_{tt} \|_{L^2}+C\|\n\|_{L^{3}\cap L^{\frac{6p_0}{6-p_0}}}\|u\|_{L^\infty}\|\na u_t\|_{L^2}\\&\quad+ C\| \n_t\|_{L^2\cap L^{3/2}} \xl(\|u_t\|_{L^6\cap L^{\frac{6p_0}{6-p_0}} } +\|\na u\|_{H^1}^\frac{3}{2}+\|\na u\|_{H^1}^\frac{1}{2} \|\na u\|_{W^{1,p_0}}\xr) \\&\quad+C\|\n\|_{L^2\cap L^{p_0}}\| u_t\|_{L^6}\|\na u \|_{L^6}\\ &\le
 C \| \sqrt{\n}  u_{tt} \|_{L^2}+\frac{1}{2} \|\na u_t\|_{L^{p_0}}+C \|\na u_t\|_{L^2}(1+\|\na u\|_{H^1} ) +C\|\na u\|_{H^1}^2,\ea\ee and
\be\la{ij2}\ba \|g\|_{L^2\cap L^{p_0}}&\le C\|\na \mn\|_{L^q}\| u\|_{L^6\cap L^\infty}\|\na u\|_{L^2\cap L^\infty}\\& \le  C\|\na u_t\|_{L^2} \|\na u\|_{H^1}+C \|\na u\|_{H^1}^2,\ea\ee
where in the second inequality in \eqref{ij1} and \eqref{ij2} we have utilized the following simple fact
\be\label{3d6.5}     \|\na u\|_{H^1\cap W^{1,p_0}} +\|P\|_{H^1\cap W^{1,p_0}}\le  C\|\na u_t\|_{L^2}  +C \|\na u \|_{L^2}\ee
due to \eqref{3d4}, \eqref{3d4.2}, \eqref{ed2} and \eqref{gj0}.

As a consequent, adding  \eqref{ij1} and \eqref{ij2} into \eqref{3'd.1}, we conclude that
 \be \la{3d.1}\ba &\|\na u_t\|_{ L^2\cap L^{p_0}}+\|P_t\|_{ L^2\cap L^{p_0}}  \\ &\le
 C \| \sqrt{\n}  u_{tt} \|_{L^2} +C \|\na u_t\|_{L^2}(1+\|\na u\|_{H^1} ) +C\|\na u\|_{H^1}^2.\ea\ee

Now, operating $u_{tt}\cdot\partial_{t}$ to \eqref{1.1}$_2$,  together with \eqref{1.1}$_1$ and \eqref{3d5.4}, we have
\be\la{utt1}\ba  &\int \n |u_{tt} |^2dx
=-\int\n_t u_t u_{tt} dx-\int\n_t u\cdot\na u\cdot u_{tt} dx-\int(2\mu(\n)D(u))_t\cdot D(u_{tt}) dx\\
&\quad +\int \n( u \cdot \na u_t+ u_t\cdot\na u)\cdot u_{tt} dx \\
&=-\frac{d}{dt}\int \mu(\n)|D(u_t)|^2dx+\theta'(t)+\frac{1}{2}\int (\n u)_t\cdot \na (|u_t|^2) dx+\int \n _t(u\cdot \na u)_t \cdot  u_t dx\\
&\quad +\int (\n u)_t\cdot \na (u \cdot \na u\cdot u_t) dx-3\int u\cdot\na\mu(\n)|D(u_t)|^2dx-\int u_t\cdot\na\mu(\n) (|D(u)|^2)_tdx\\
&\quad -2\int u\cdot\na(\mu(\n))_t D(u)\cdot D(u_t)dx+\int \n( u \cdot \na u_t+ u_t\cdot\na u)\cdot u_{tt} dx \\
&\triangleq -\frac{d}{dt}\int \mu(\n)|D(u_t)|^2dx+ \theta'(t)
+ \sum_{i=1}^{7}J_i.\ea\ee
where
$$\theta(t)=\frac{1}{2}\int \n u \cdot\na(|u_t|^2)dx-\int \n_t  u\cdot\na u \cdot u_t dx+2\int u\cdot\na\mu(\n)\,D(u)\cdot D(u_t)dx.$$
Now we have to estimate $J_i$, $1\leq i\leq 7$. First, by \eqref{ij6} and \eqref{3d.1},
\be\la{ij11}\ba|J_1|+|J_2|&\le  \|\n_t\|_{L^2}\|u \|_{L^\infty}  \|\na u_t\|_{L^3}\|u_t\|_{L^6}+\|\n\|_{L^6}\|\na u\|_{L^2}\|u_t\|_{L^6}^2\\
&\leq \delta \|\sqrt{\n} u_{tt}\|_{L^2}^2+C(\delta)(1+ \|\na u \|^2_{H^1} )\|\na u_t\|_{L^2}^2+C\|\na u \|^4_{H^1}.\ea\ee
Next, due to \eqref{ij6} \eqref{3d6.5} and \eqref{3d.1},
\be\la{ij12}\ba|J_3|&\le  C\|\n_t\|_{L^2}\|\na u\|^{1/2}_{H^1} (\|\na u\|_{H^1} \|\na u\|_{H^1\cap W^{1,p_0}}+\|\na u\|_{H^1}^2)\|\na u_t\|_{L^2}\\
&\quad +C\|\na u\|_{H^1}^2\|\na u_t\|_{L^2}^2\\
&\leq \delta \|\sqrt{\n} u_{tt}\|_{L^2}^2+C(\delta)(1+ \|\na u \|^2_{H^1} )\|\na u_t\|_{L^2}^2+C\|\na u \|^4_{H^1}.\ea\ee
For $J_4$, set $\beta\triangleq\frac{q(p_0-2)}{ p_0q-p_0-2q },$  it clear that $\beta\in [1, 2q_0]$. By \eqref{3a1}, \eqref{ed2}, and  \eqref{3d.1},
\be\la{ij4}\ba|J_4|&\le C\|u\|_{L^\infty}\|\na \mn\|_{L^q}\|\na u_t\|^{\frac{2}{\beta}}_{L^2}\|\na u_t\|^{2(1-\frac{1}{\beta})}_{L^{p_0}}\\&\le \delta\|\na u_t\|^2_{L^{p_0}}+C(\delta)\|\na u\|_{H^1}^{\frac{\beta}{2}} \|\na u_t\|^2_{L^2} \\&\le C\delta \|\sqrt{\n} u_{tt}\|_{L^2}^2+C(\delta)(1+\|\na u\|_{H^1}^{q_0 })\|\na u_t\|^2_{L^2}  + C(\delta)\| \na  u\|_{H^1}^4. \ea\ee
Obviously, a direct calculation gives
\be\ba |J_7|\le \delta \int \n |u_{tt} |^2dx+ C(\delta)\| \na  u\|_{H^1}^2\|\na u_t\|_{L^2}^2,\ea\ee
and by  \eqref{3d6.5},
\be\ba |J_5|&\le C\|u_t\|_{L^6}\|\na u_t\|_{L^2}\|\na\mn\|_{L^q}\|\na u\|_{L^{3q/(q-3)}}\\ &\le C \|\na u_t\|_{L^2}^2(\|\na u_t\|_{L^2}+1) .\ea\ee
For $J_6$, using integration by part and \eqref{1.1}$_1$, together with \eqref{3d6.5} and \eqref{3d.1}, we check that
\be\ba J_6&= \int (\mu(\n))_t \,\div((|D(u)|^2)_tu) dx=\int (\mu(\n))_t \,u\cdot\na(|D(u)|^2)_t dx\\
&=2\int (\mu(\n))_t \,u^k D(\partial_k u)\cdot D(u_t) dx-\int \mu(\n) \,u\cdot \na(|D(u_t)|^2) dx\\
&\quad -2\int  \,\div(u^k(\mu(\n)D(u))_t)\cdot\partial_k u_t dx\\
&=-2\int u\cdot\na\mu(\n) \,u^k D(\partial_k u)\cdot D(u_t) dx+\frac{1}{3} J_4 \\
&\quad +2\int u\cdot \na\mu(\n) \,\na u^k\cdot D(u)\cdot \partial_k u_t dx-2\int \mu(\n) \,\na u^k\cdot D(u_t)\cdot \partial_k u_t dx\\
&\quad -2\int  u^k\,\div(\mu(\n)D(u))_t\cdot\partial_k u_t dx\\
&\leq C\|u\|_{L^\infty}\|u\|_{L^6}\|\na u_t\|_{L^2}\|\na\mn\|_{L^q}\|\na u\|_{L^{3q/(q-3)}}+\delta \|\sqrt{\n} u_{tt}\|_{L^2}^2\\
&\quad+C(\delta)(1+\|\na u\|_{H^1}^{q_0 })\|\na u_t\|^2_{L^2}  + C(\delta)\| \na  u\|_{H^1}^4\\
&\quad + C\| u\|_{L^{3q/(q-3)}}\|\na u\|_{L^6}^2\|\na u_t\|_{L^2}\|\na\mn\|_{L^q}+C \|\na u\|_{H^1\cap W^{1,p_0}}\|\na u_t\|_{L^2}^2\\
&\leq\delta\|\sqrt{\n} u_{tt}\|_{L^2}^2+C(\delta)(1+\|\na u_t\|_{L^2}+\|\na u\|_{H^1}^{q_0 })\|\na u_t\|^2_{L^2}  + C(\delta)\| \na  u\|_{H^1}^4\\.\ea\ee
where in the second inequality we have use the fact that
\bnn\ba
&-2\int  u^k\,\div(\mu(\n)D(u))_t\cdot\partial_k u_t dx\\&=-2\int u^k\rho_t u_t\cdot\partial_k u_t dx-2\int u^k\rho u_{tt}\cdot\partial_k u_t dx-2\int u^k\rho_t u\cdot\na u\cdot\partial_k u_t dx\\
&\quad -2\int u^k\rho u_t\cdot\na u\cdot\partial_k u_t dx-2\int u^k\rho u\cdot\na u_t\cdot\partial_k u_t dx+2\int P_t \na u :\na u dx\\
&\leq \frac{\delta}{2} \|\sqrt{\n} u_{tt}\|_{L^2}^2+C(\delta)(1+ \|\na u \|^2_{H^1} )\|\na u_t\|_{L^2}^2+C\|\na u \|^4_{H^1},
\ea\enn
due to \eqref{1.1}$_2$, \eqref{3d6.5} and \eqref{3d.1}.

Together with these estimates of $J_i$, choosing $\delta$ suitably small, we derive from \eqref{utt1} that
\be\la{ij20}\ba & \frac{d}{dt}\int \mu(\n)|D(u_t)|^2dx +\frac12\int \n |u_{tt}|^2dx \\ &\le \theta' (t)+C  (1+\| \na u_t\|_{L^2}+\| \na u\|_{H^1}^{q_0}) \|\na u_t\|_{L^2}^2+C \|\na u\|_{H^1}^4,\ea\ee
where $\theta(t)$ satisfies
\be\la{ij22} \ba |\theta(t)|\le& C\|\sqrt{\n}u_t\|_{L^2}\|\na u_t\|_{L^2}\|\na u\|_{H^1}+C\|\n_t\|_{L^2}\| u\|_{L^6}\|u_t\|_{L^6}\|\na u\|_{L^6}\\&+C \|\na\mn\|_{L^q}\|\na u_t\|_{L^2}\|\na u\|_{H^1}^2\\ \le& \frac{1}{4}\xmu\|\na u_t\|_{L^2}^2+C\|\sqrt{\n}u_t\|_{L^2}^2 \|\na u\|_{H^1}^2+C\|\na u\|_{H^1}^4,\ea\ee due to \eqref{3a1} and \eqref{ij6}.

On the other hand, observing  that \eqref{3d6.7} gives
\bnn \la{ij23}\ba \sigma^{q_0}(1+\| \na u_t\|_{L^2}+\| \na u\|_{H^1}^{q_0}) \|\na u_t\|_{L^2}^2 \le C  \sigma^{q_0-1} \|\na u_t\|_{L^2}^4+C\sigma \| \na u_t\|^2_{L^2},\ea\enn
Then,  multiplying \eqref{ij20} by $\sigma^{q_0} e^{\lambda t} $, by Gr\"onwall's inequality, along with Lemma \ref{lem-ed1} and \eqref{ij22}, we get
\be \la{ij24}\sup_{0\le t\le T}\sigma^{q_0}e^{\lambda t}\|\na u_t\|_{L^2}^2+\int_0^T \sigma^{q_0} e^{\lambda t}\|\n^{1/2}u_{tt}\|_{L^2}^2dt\le C.\ee
Furthermore, noticing that by \eqref{ij6},
 \bnn \ba\|(\n u_t)_t\|_{L^2}^2\le C\|\na u\|_{H^1}\|\na u_t\|_{L^2
\cap L^{p_0}}^2+C\|\n^{1/2}u_{tt}\|_{L^2}^2,\ea \enn
together with \eqref{ij24}, \eqref{3d6.5}, \eqref{3d.1},  and Lemma \ref{lem-ed1},  we derive \eqref{3dr6.1} and thus completes the proof of Lemma \ref{lem5.a3}.
\end{proof}

\section{Proofs of Theorems \ref{thm1} and \ref{thm2}}

With all the a priori estimates in Section 3 at hand, we are now able to  prove Theorems \ref{thm1} and \ref{thm2}.

\emph{\bf Proof of Theorem \ref{thm1}.} According to Lemma 2.1, there exists a $T_{*}>0$ such that the system \eqref{1.1}--\eqref{inf1} has a unique local strong solution $(\rho,u,P)$ in $\Omega\times(0,T_{*}]$. By \eqref{2.2}, there exists a $T_1\in (0, T_*]$ such that \eqref{3a1} holds for $T=T_1$.

Set
\begin{equation}\label{20.1}
T^{*}\triangleq\sup \{T\, |\, \eqref{3a1}\ \text{holds}\}.
\end{equation}
It is clear that $T^*\ge T_1>0$. Hence,  it follow from   Lemma \ref{lem-ed1} and \eqref{3dr6.1}  that for any $0< \tau<T\leq T^{*}$ with $T$ finite,
\begin{equation}\label{20.2}
 \nabla u, \  P\in C([\tau,T];L^2)\cap C(\overline \Omega\times [\tau,T] ) ,
\end{equation}
where we have utilized the following standard embedding
\begin{equation*}
L^{\infty}(\tau,T;H^1\cap W^{1,p_0})\cap H^1(\tau,T;L^2)\hookrightarrow C([\tau,T];L^2)\cap C(\overline \Omega\times [\tau,T] ) .
\end{equation*}
Moreover, we deduce from \eqref{3a1}, \eqref{gj0}, \eqref{3d6.21}, and \cite[Lemma 2.3]{L1996} that
\begin{equation}\label{20.3}
\rho\in C([0,T];L^{3/2}\cap H^1), \quad\na\mn \in C([0,T];L^q).
\end{equation}
Due to \eqref{ed2} and \eqref{3dr6.1},
a standard arguments leads to
\bnn\label{20.4}
\rho   u_t \in H^1(\tau,T;L^2)\hookrightarrow C([\tau,T];L^2),
\enn
which together with \eqref{20.2} and \eqref{20.3} implies
\begin{equation}\label{20a4}
\rho   u_t+\n u\cdot\na u \in  C([\tau,T];L^2).
\end{equation}
On the other hand, noting $(\n,u)$ satisfies \eqref{3rd1} with $F\equiv \n u_t+\n u\cdot\na u$, and then by  \eqref{1.1}, \eqref{20.2},  \eqref{20.3},   \eqref{20a4},  and \eqref{3dr6.1}, we find that for any $p\in [2,p_0)$,
\begin{equation}\label{n20.4}
 \na u,~P\in C([\tau,T];D^1\cap D^{1,p}).
\end{equation}

Now, we claim that
\begin{equation}\label{20.5}
T^*=\infty.\end{equation}
Otherwise, $T^*<\infty$. By Proposition \ref{pr1}, it indicates that \eqref{3a2} holds at $T=T^*$.  By virtue of \eqref{20.2}, \eqref{20.3},  and \eqref{n20.4}, one could set
$$(\n^*,u^*)(x)\triangleq(\n, u)(x,T^*)=\lim_{t\rightarrow T^*}(\n, u)(x,t),$$
and then \bnn \n^*\in L^{3/2}\cap H^1,\quad u^*\in D^{1}_{0,\div}\cap D^{1,p}\enn  for any $p\in [2,p_0).$   Consequently,  $(\n^*,\n^*u^*)$ could be taken as the initial data and  Lemma
\ref{local} implies that there exists some $T^{**}>T^*$ such that \eqref{3a1} holds for $T=T^{**}$, which contradicts the definition of $ T^*.$
So \eqref{20.5} holds. \eqref{oiq1} and  \eqref{e} come directly from \eqref{3d6.21} and  \eqref{3dr6.1}.We complete the proof of Theorem.
 \hfill $\Box$

\emph{\bf Proof of Theorem \ref{thm2}}: We can modify slightly the proofs of Lemma \ref{lem-ed1} and \eqref{3d6.21} to obtain  \eqref{oiq2} and \eqref{eq}, here we leave it to the reader.
 \hfill$\Box$




\end{document}